\newtheorem{theorem}{Theorem}[section]
\newtheorem{fact}[theorem]{Fact}
\newcommand{\bfact}{\begin{fact}}
\newcommand{\efact}{\end{fact}}
\def\carby{\subset\kern-3pt\subset}
\def\comment#1{}
\def\id{\kern.2em{\rm I}\kern-.56em{\rm 1}}
\def\subsetneq{\lower 2.2pt\hbox{$\,{\buildrel\subset\over\neq}\,$}}
\def\tto{\mapsto\kern-8pt\rightarrow}
\newcommand{\beq}{\begin{equation}}
\newcommand{\eeq}{\end{equation}}
\renewenvironment{proof}{\medskip{\sc Proof.}}{\proofend\medskip}
\newcommand{\ben}{\begin{enumerate}}
\newcommand{\een}{\end{enumerate}}
\newtheorem{proposition}[theorem]{Proposition}
\newtheorem{lemma}[theorem]{Lemma}
\newtheorem{corollary}[theorem]{Corollary}
\theoremstyle{definition}
\newtheorem{definition}[theorem]{Definition}
\newtheorem{example}[theorem]{Example}
\newtheorem{remark}[theorem]{Remark}
\title[Integrating brackets]{Integral representations for bracket-generating multi-flows}
\author{Ermal Feleqi}
\address{Dipartimento di Matematica ``Tullio Levi-Civita'', Universit{\`a} degli Studi di Padova, Via Trieste, 63, Padova 35121, Italy}
\email{feleqi@math.unipd.it}
\author{Franco Rampazzo}
\address{Dipartimento di Matematica ``Tullio Levi-Civita'', Universit{\`a} degli Studi di Padova, Via Trieste, 63, Padova 35121, Italy}
\email{rampazzo@math.unipd.it}
\begin{document}
\maketitle

\newcommand{\compo}{\circ}
\def\transv{{\cap\kern-5pt\lower -2.2pt\hbox{$|$}\kern-4.9pt\lower
-7pt\hbox{$-$}}}
\def\baba{|\kern-1.7pt|}
\def\stransv{{\cap\kern-5.3pt\lower -2.2pt\hbox{$\baba$}\kern-5.3pt\lower
-7pt\hbox{$-$}}}
\newcommand{\eps}{\varepsilon}
\newcommand{\beqn}{\begin{eqnarray}}
\newcommand{\eeqn}{\end{eqnarray}}
\def\doublearrow{\,\hbox to 19pt{$\longrightarrow$\hfill}
\kern-19pt\raise2.5pt\hbox to 19pt{$\longrightarrow$\hfill}}
\def\triplearrow{\,\,\raise-2.5pt\hbox to 19pt{$\longrightarrow$\hfill}
\kern-19pt\hbox to 19pt{$\longrightarrow$\hfill}
\kern-19pt\raise2.5pt\hbox to 19pt{$\longrightarrow$\hfill}}
\def\verylongrightarrow{\raise2.15pt\hbox to 28pt{\hrulefill}\kern-3.25pt\longrightarrow}
\def\longtriplearrow{\hbox to 44pt{$\verylongrightarrow$\hfill}
\kern-44pt\raise2.5pt\hbox to 44pt{$\verylongrightarrow$\hfill}
\kern-44pt\raise5pt\hbox to 44pt{$\verylongrightarrow$\hfill}}
\def\longtto{\mbox{$\verylongrightarrow\kern-28pt\verylongrightarrow$}}
\def\ovarrow#1{{\stackrel{#1}{\verylongrightarrow}}}
\def\ovaarrow#1{{\sta333333333333333ckrel{#1}{\tto}}}
\def\ovlarrow#1{{\stackrel{#1}{\verylongleftarrow}}}
\def\verylongleftarrow{\longleftarrow\kern-4pt\raise2.2pt\hbox to 25pt{\hrulefill}}
\def\longsearrow{\setlength{\unitlength}{0.5cm}
\begin{picture}(1.2,1.2)
\put(-0.2,1.2){\vector(1,-1){2}}
\end{picture}}
\def\longnearrow{\setlength{\unitlength}{0.5cm}
\begin{picture}(1.2,1.2)
\put(-0.2,-1.2){\vector(1,1){2}}
\end{picture}}
\def\longdownarrow{\setlength{\unitlength}{0.25cm}
\begin{picture}(0.6,1)
\put(0.2,1){\vector(0,-1){2}}
\end{picture}}
\def\longuparrow{\setlength{\unitlength}{0.25cm}
\begin{picture}(1.2,1.2)
\put(0.2,-0.5){\vector(0,1){2}}
\end{picture}}
\def\veceps{\vec{\eps}}
\def\vic{\pmb}
\def\Is{I^{\#}}
\def\Xis{\Xi^{\#}}
\def\xis{\xi^{\#}}
\def\etas{\eta^{\#}}
\def\zetas{\zeta^{\#}}
\def\qs{q^{\#}}
\def\ts{t^{\#}}
\def\taus{\tau^{\#}}
\def\nus{\nu^{\#}}
\def\restr{\,\lceil\,}
\def\corestr{\,\rceil\,}
\def\equaldef{{\buildrel \rm def \over =}}
\def\proofend{\hfill$\diamondsuit\,\,$}
\def\pmb#1{\setbox0=\hbox{$#1$}%
  \kern-.025em\copy0\kern-\wd0
  \kern.05em\copy0\kern-\wd0
  \kern-.025em\raise.0433em\box0 }
\def\pmbo#1{\setbox0=\hbox{$#1$}%
  \kern-.055em\copy0\kern-\wd0
  \kern.05em\copy0\kern-\wd0
  \kern-.055em\raise.0433em\box0 }
\def\bem{\em }
\def\bemu{{\bem U}}
\def\zz{\mathbb{Z}}
\def\qq{\mathbb{Q}}
\def\hh{\mathbb{H}}
\def\cc{\mathbb{C}}
\def\rr{\mathbb{R}}
\def\nn{\mathbb{N}}
\def\aa{\mathbb{A}}
\def\kk{\mathbb{K}}
\def\ii{\mathbb{I}}
\def\jj{\mathbb{J}}
\def\pp{\mathbb{P}}
\def\tpp{\tilde{\mathbb{P}}}
\def\vv{\mathbb{V}}
\def\ff{\mathbb{F}}
\def\tt{\mathbb{T}}
\def\dd{\mathbb{D}}
\def\gg{\mathbb{G}}
\def\ll{\mathbb{L}}
\def\pp{\mathbb{P}}
\def\bmath{\mathbb{B}}
\def\smath{\mathbb{S}}
\def\zmath{\mathbb{Z}}
\def\rrinf{\overline{\rr}}
\newcommand{\nball}{{\bar\bmath}^n}
\newcommand{\nballop}{\bmath^n}
\def\regu#1#2{{\rm CCA}(#1;#2)}
\let\d=\delta
\let\ve=\varepsilon
\let\l=\lambda
\let\z=\zeta
\let\o=\omega
\let\L=\Lambda
\let\s=\sigma
\let\Si=\Sigma
\let\r=\rho
\let\ol=\overline
\let \a =\alpha
\let \vf = \varphi
\newcommand{\rh}{\hat{\r}}
\newcommand{\rb}{\ol{\r}}
\newcommand{\Ec}{\mathcal{E}}
\newcommand{\Mcal}{\mathcal{M}}
\newcommand{\Fc}{{\mathcal F}}
\newcommand{\Ac}{{\mathcal A}}
\newcommand{\Uc}{{\mathcal U}}
\newcommand{\Hc}{{\mathcal H}}
\newcommand{\Gc}{{\mathcal G}}
\def\Tc{{\mathcal T}}
\newcommand{\be}{\beta}
\let\g=\gamma

\def\bel{\begin{equation}\label}
\def\eeq{\end{equation}}

\newcommand{\red}{\textcolor{black}}

\begin{abstract}
If $f_1,f_2$ are smooth vector fields on an open subset of an  Euclidean space and $[f_1,f_2]$ is their Lie bracket, the asymptotic formula \bel{asab}\Psi_{[f_1,f_2]}(t_1,t_2)(x)
- x =t_1t_2 [f_1,f_2](x) +o(t_1t_2),\eeq  where we have set $
\Psi_{[f_1,f_2]}(t_1,t_2)(x) \equaldef \exp(-t_2f_2)\circ\exp(-t_1f_1)\circ\exp(t_2f_2)\circ\exp(t_1f_1)(x)$, is valid for all $t_1,t_2$ small enough.
 In fact, the integral, exact formula
\bel{abstractform}
\Psi_{[f_1,f_2]}(t_1,t_2)(x)
- x = \int_0^{t_1}\int_0^{t_2}[f_1,f_2]^{(s_2,s_1)} (\Psi(t_1,s_2)(x))ds_1\,ds_2 ,
\eeq where $ [f_1,f_2]^{(s_2,s_1)}(y) \equaldef D\Big(\exp(s_1f_1)\circ \exp(s_2f_2{{)}}\Big)^{-1}\cdot [f_1,f_2](\exp(s_1f_1)\circ \exp(s_2f_2){(y)}), $ with ${{y = \Psi(t_1,s_2)(x)}}$ has also been proven. Of course \eqref{abstractform} can be regarded as an improvement of \eqref{asab}.  In this paper we show that an integral representation like \eqref{abstractform} holds true for any iterated {{bracket}} made {{from}} elements of a family of vector fields $\red{\{}f_1,\dots,f_{{k}}\red{\}}$. In perspective, these integral representations might  lie at the basis for extensions of asymptotic formulas involving {{nonsmooth}} vector fields.
\end{abstract}

%%%%%%%%%%%%%%% end abstract

\section{Introduction and preliminaries}

\subsection{A notational premise} Let us begin with  a few  notational conventions which are consistent with the so-called Agrachev-Gamkrelidze formalism (see \cite{AgGa78,AgGa80,RaSu07}). First, in the formulas involving flows and vector fields, we shall write the argument of a function on the left. For instance, if $M$ is a differentiable manifold,  $x\in M$  and $f$ is a {{locally Lipschitz}} vector field on $M$, we shall use $xf$ to denote the evaluation of $f$  at $x$. Similarly, for the value at $t$ of the Cauchy problem
$
\dot x = f(x){{,\,\,}} x(0)=\bar x
$ we shall write $\bar x e^{tf}$ (so in particular, the differential equation itself will be written $\frac{d}{dt} ({\bar x} e^{tf}) = \bar x e^{tf} f$).
Secondly, if $t\in \rr$, and  $f,g$ are $C^1$ vector fields, the notation $\bar xfe^{tg}$ stands for   the tangent vector at $\bar xe^{tg} $ obtained by {{(}}i) evaluating $f$ at $\bar x$ (so obtaining the vector $\bar x f$) and then {{(}}ii) by { mapping }  $\bar x f$ though the differential (at $\bar x$)  of the map   $x{{\mapsto}}  xe^{tg} $.  Finally, the  vector fields $f,g$ can be regarded as first order {{differential}} operators, so the  notation $fg$ reasonably stands for the second order {{differential}} operator which, in the conventional notation, would map any $C^2$ function $\phi$ to  $D(D\phi\cdot g)\cdot f${{.}}\footnote{In terms of Lie derivatives, this operator maps $\phi$ into $L_fL_g\phi${{.}}} In particular, the Lie  bracket $[f,g]$, which is a first order {{differential}} operator resulting as a difference between two second order {{differential}} operators, in this notation has the following expression: $[f,g]\equaldef fg-gf$.
These conventions turn out to be particularly convenient for the subject we are going to deal with. However, sometimes more conventional notation will be utilized as well and the context will be sufficient to avoid any confusion.

\subsection{The main question} Let $n$ be a positive integer and let  $M\subseteq \rr^n$ be an open subset. If $f_1,f_2$ are $C^1$ vector fields and $x\in M$, the  Lie bracket
$[f_1,f_2]$ verifies the well-known asymptotic formula
\bel{lie2as}
 x\Psi_{[f_1,f_2]}(t_1,t_2)=x+ t_1t_2 \cdot(x[f_1,f_2]) + o(t_1t_2),
\eeq
where we have set  $x\Psi_{[f_1,f_2]}(t_1,t_2){{\equaldef}}  xe^{t_1f_1}e^{t_2f_2}(e^{t_1f_1})^{-1}(e^{t_2f_2})^{-1}$ (=$xe^{t_1f_1}e^{t_2f_2}e^{-t_1f_1}e^{-t_2f_2}$). {{Note that}} \eqref{lie2as} is the same as \eqref{asab}, just rewritten in the above-introduced formalism).
Similarly, for a bracket of degree $3$ one has
\bel{lie3as}
 x\Psi_{[[f_1,f_2],f_3]}(t_1,t_2,t_3)=x+t_1t_2t_3\cdot (x[[f_1,f_2],f_3]) +o(t_1t_2t_3).
\eeq
where $ x\Psi_{[[f_1,f_2],f_3]}{{\equaldef}} x\Psi_{[f_1,f_2]}(t_1,t_2)e^{t_3f_3}\left(\Psi_{[f_1,f_2]}(t_1,t_2)\right)^{-1}(e^{t_3f_3})^{-1} {{.}}$
\,\,\,\footnote{Notice that the left-hand side can be written as the product of 10 (=4+1+4+1) flows:  $$  x\Psi_{[[f_1,f_2],f_3]}= xe^{t_1f_1}e^{t_2f_2}e^{-t_1f_1}e^{-t_2f_2}e^{t_3f_3}e^{t_2f_2}e^{t_1f_1}e^{{{-}}t_2f_2}e^{{{-}}t_1f_1}{{e^{-t_3f_3}.}} $$}
\if
In particular  \eqref{lie2as} can be used to    deduce the equivalence between  local commutativity and the vanishing of  $[f_1,f_2]$ in a neighborhood of $x$.
\fi
Asymptotic estimates like \eqref{lie2as}-\eqref{lie3as} can be utilized, through a suitable application of open mapping arguments, to deduce various controllability  results.

In this paper we aim at replacing  asymptotic estimates for multiflows  like the  above ones  with {\it integral, exact} formulas.  For a bracket of degree two such a formula has been provided in  \cite{RaSu01}. More precisely, if $f_1,f_2$ are vector fields of class $C^1$ then\ for every $t_1,t_2$ sufficiently small the equality
\bel{formulaRS1}x\Psi_{[f_1,f_2]}(t_1,t_2)
=x+\int_0^{t_1}\int_0^{t_2}x\Psi_{[f_1,f_2]}(t_1,s_2)[{f_1},{f_2}]^{({{s_2}},s_1)}\,ds_1\,ds_2\eeq
holds true, where we have set
\bel{intbr2int}\nonumber
[{f_1},{f_2}]^{({{s_2}},s_1)} \equaldef  e^{{{s_2}} {f_2}}e^{s_1
{f_1}} [{f_1},{f_2}]e^{-s_1
{f_1}} e^{-{{s_2}} {f_2}}{{.}}
\eeq
Formula \eqref{formulaRS1} says that the result of the flow  composition $x\Psi_{[f_1,f_2]}(t_1,t_2)$ can be calculated as the integral,    over the multi-time rectangle $[0,t_1]\times[0,t_2]$ of $ x\Psi_{{{[f_1,f_2]}}}(t_1,s_2) [f_1,f_2]^{(s_2,s_1)}$, namely the function that  maps each $(s_1,s_2)\in [0,t_1]\times[0,t_2] $ to the estimation at $x\Psi_{[f_1,f_2]}(t_1,s_2)$  of the {\it integrating} bracket $[{f_1},{f_2}]^{(s_2,s_1)}$.
Incidentally, let us observe that as a trivial byproduct of \eqref{formulaRS1} one gets  the commutativity theorem (stating that the flows of $f_1$ and $f_2$ locally commute if and only if $[f_1,f_2] \equiv 0$).

We shall construct integrating brackets corresponding to every iterated bracket so that formulas analogous to \eqref{formulaRS1} hold true. Though we will set our problem on an open subset of $\rr^n$, we  will perform such {{a}} construction in a chart invariant way, so that the resulting formulas are meaningful on a differentiable manifold as well.

Rather than stating here  the main theorem (see Theorem \ref{integralth} below), which would  require a certain number of technicalities, we limit ourselves  to illustrating the situation  in the  case of a degree $3$ bracket $[[f_1,f_2],f_3]$. Let us assume that $f_1$ and $f_2$ are of class $C^2$ and $f_3$ is of class $C^1$, and let us define the integrating bracket  $[[f_1,f_2],f_3]^{(t_1,{{s_3}},s_1,s_2)}$ by setting, for every $t_1,{{s_3}},s_1,s_2$ sufficiently small,
\begin{align*}
&[[f_1,f_2],f_3]^{(t_1,{{s_3}},s_1,s_2)}\equaldef \\
&(e^{{{s_3}}f_3}e^{t_1 f_1}e^{s_2f_2}e^{-t_1f_1}e^{-s_2 f_2})
 \Big[ (e^{s_2 f_2}e^{s_1f_1})  \left[f_1\,,\,f_2 \right](e^{s_2 f_2}e^{s_1f_1}) ^{-1}\,,\,  \,f_3 \Big]
(e^{{{s_3}}f_3}e^{t_1 f_1}e^{s_2f_2}e^{-t_1f_1}e^{-s_2 f_2})^{-1}{{.}}
\end{align*}
Then Theorem  \ref{integralth} says that  there exists  $\delta>0$ such that for all $t_1,t_2,t_3\in  [-\delta,\delta]$ %such that
\bel{lie3int}
x\Psi_{[[f_1,f_2],f_3]}(t_1,{{t_2}},t_3)=x+
\int_0^{t_1}\!\!\int_0^{t_2}\!\!\int_0^{t_3} x\Psi_{[[f_1,f_2],f_3]}(t_1,t_2,s_3)
[[f_1,f_2],f_3]^{(t_1,s_3,s_1,s_2)}
\,ds_1\,ds_2\,ds_3{{.}}
\eeq

Let us point out two main facts:  \begin{itemize}
\item[{{(}}i)]  {{O}}n one hand, formula \eqref{lie3int} is similar to  \eqref{formulaRS1}{{;}}
\item[{{(}}ii)] {{O}}n the other hand, there is a crucial difference in the definition of integrating bracket passing from the {{case of a}} degree $2$ {{bracket}} to {{the case of a}} degree {{greater than}} $2$ {{bracket}}; indeed while the integrating bracket$ [f_1,f_2]^{({{s_2}},s_1)}$  is defined as an integral (over $[0,t_1]\times [0,t_2]$) of a suitable  adjoint of the classical bracket at the points  $x\Psi_{[f_1,f_2]}(t_1,s_2)$,  the integrand in   $[[f_1,f_2],f_3]^{(t_1,{{s_3}},s_1,s_2)}$ contains the bracket of $[f_1,f_2]^{({{s_2}},s_1)}$ --instead of $[f_1,f_2]$-- and $f_3$.  In fact,  the definition  of {{a}} higher degree integrating bracket is given by induction and involves various adjoint{{s}} of classical brackets (see Definition~\ref{def-expbra}).

\end{itemize}

\subsection{A motivation} Integral representations may be regarded as improvements of asymptotic formulas. In fact, our interest for this issue was raised by  the  aim  of  laying down a basic setting  on which one can reasonably investigate families of  vector fields that are less regular than what is required by the classical definition of {{a}} (iterated) Lie bracket. A typical case where such an investigation might prove interesting is provided by the {Chow-Rashevsk{{i}} Theorem}, which for  $C^{\infty}$ vector fields  $f_1,\dots,f_k$, guarantees small-time local controllability at $x\in M$ for
driftless control systems {{of the form:}}
 $
\dot y = \sum_{i=1}^k u_if_i(y){{,}}
$
{{with}} $|u_i| \leq 1${{,}} as soon as a condition like $
Lie \{f_1,\dots,f_k\}(x)  = T_{x} M$
 is verified{{.}} \footnote{ $Lie \{f_1,\dots,f_k\}$ is the Lie algebra  generated by the family {{of vector fields}} $\red{\{}f_1,\dots,f_k\red{\}}${{.}}} Akin results are valid for vector fields $f_i$ of class  $C^{r_i}$, $r_i$ being the maximal order of differentiation needed to define all the (classical)  brackets that  make the Lie algebra rank condition to hold tru{{e}} %(\ref{fullrank})
(see  Subsection \ref{concluding-reg}).

 So, a  natural question might be the following: what about {{the}} Chow-Rashevsk{{i}} Theorem in the case when, sa{{y}} the vector fields  $f_1,\dots,f_k$ are such that each $f_i$, $i=1, \dots, {{k}}$, is  just of class $C^{r_i -1}$ with locally Lipschitz $r_i-1$-{{$th$}} order derivatives?
Some  different  answers have been proposed{{,}} e.g.{{,}} in {{\cite{MoMo14-Jacobi}, \cite{MoMo13-Involutive}}}, \cite{RaSu01}, \cite{SaWh06}.  %{\bf GNU mettere gli altri:}.
% qui si potrebbe mettere \cites{MoMo13-Jacobi, MoMo13-Involutive}
In particular, in \cite{RaSu01} a set-valued notion of bracket has been introduced for locally Lipschitz vector fields.  However, a  mere recursive definition of bracket of degree greater than two would not work (see{{,}} e.g.{{,}} \cite{RaSu07}*{Section~7}, where it is shown that such an iterated bracket would be {\it too small} for an asymptotic formula to hold true).  We think that the study of integral representations in the smooth case may represent a first step towards a useful definition of iterated bracket in the non smooth case (see Subsection \ref{Subsec-nonsmooth}).

\subsection{Outline of the paper} The paper is organized as follows: in the remaining part of the present section we recall  the concept of formal iterated bracket of letters $X_1, X_2, \dots$. In Section~2 we introduce the notion of {\it integrating bracket}{{.}}  Section~3 is devoted to the main result of the paper, namely Theorem \ref{integralth}, which provides exact representations for bracket-generating multi-flows through integrals involving integrating brackets. In  Section~4
we discuss the question of regularity in connection with the validity of integral formulas. As a byproduct of the main result we state a Chow-{{Rashevski}} theorem with low regularity assumptions. In Section~5 we provide a simple example remarking the crucial difference between integrating brackets of degree $2$ and those of higher degree.  Finally we discuss some motivations of the present article coming from the aim of extending asymptotic formulas (possibly, via {{the regularization of the {{vector fields}}}}) to a nonsmooth setting.

\subsection{Formal brackets}
 Given a fixed sequence ${\bf X}=(X_1,X_2,\ldots)$ of distinct
objects called {\em variables}, or {\em indeterminates},
let $W({\bf X})$ be the set of all words in the alphabet consisting {{of}}
$X_i$, the left bracket, the right bracket, and the comma. The
{\em bracket} of two members $W_1$, $W_2$ of $W({\bf X})$ is the word
$[W_1,W_2]$ obtained by writing first a left bracket, then $W_1$, then
a comma, then $W_2$, and then a right bracket. We call {\em iterated brackets}  of ${\bf X}$ the elements
of  the smallest subset $S$ of $W({\bf X})$ that contains the
single-letter words $X_j$ and is such that whenever $W_1$ and $W_2$
belong to $S$ it follows that $[W_1,W_2]\in S$.  The {\em degree} $deg(W)$  of a
word $W\in W({\bf X})$ is the length of the {\em letter sequence} of $W$, namely of the sequence $Seq(B)$ obtained from $W$ by deleting all the brackets and
commas. Clearly, if $W_1,W_2\in W({\bf X})$ then
$deg([W_1,W_2])=deg(W_1)+deg(W_2)$.

An iterated bracket $B\in ITB({\bf X})$ is {\em canonical} if
$Seq(B)=X_1X_2\cdots X_{deg(B)}$.
\if(For example,
$\,\,\,\,\,\,[[[X_1,X_2],X_3],[X_4,[X_5,X_6]]]\,\,\,\,$ is canonical,
while on the other hand the brackets
$\,\,\,\,\,\,[[[X_4,X_5],X_6],[X_7,[X_8,X_9]]]\,$,
$\,\,\,\,\,\,[[[X_1,X_2],X_3],[X_4,[X_5,X_7]]]\,$,
$\,\,\,\,\,\,[[[X_1,X_2],X_4],[X_3,[X_5,X_6]]]\,$, \ and
$[[[X_1,X_2],X_1],[X_3,[X_4,X_5]]]$ are not.)
\fi
{{G}}iven a canonical bracket $B\in ITB({\bf X})$ of $deg(B)=m$, and any
finite sequence $\sigma=(\sigma_1,\ldots,\sigma_n)$ of objects
(possibly with repetitions) such that $n\ge m$, we use $B(\sigma)$, or
$B(\sigma_1,\ldots,\sigma_m)$, to denote the expression obtained from
$B$ by substituting $\sigma_j$ for $X_j${{,}} $j=1,\ldots,m$. {{F}}or
example, (a)~if $B=[[X_1,X_2],[X_3,X_4]]$ then
$
B(f_1,f_2,g,h)=[[f_1,f_2{{]}},[g,h]],
B(f_1,f_2,f_1,f_2)=[[f_1,f_2{{]}},[f_1,f_2]{{]}}$,
(b)~if $B$ is any canonical bracket of degree $m$, then
\mbox{$B(X_1,X_2,\ldots,X_m)=B$}, (c)~if $B=[X_1,X_2]$ and ${\bf
f}=(f_1,f_2,f_3)$ then $B({\bf f})=[f_1,f_2]${{.}}

Given any canonical bracket $B$ of degree $m$ and any nonnegative
integer $\mu$, the {\em $\mu$-shift} of $B$ is the iterated bracket
$$
B^{(\mu)}=B(X_{1+\mu},X_{2+\mu},\ldots,X_{m+\mu})\,.
$$
{{F}}or \mbox{example}, if
\mbox{$\,\,\,\,B=[[[X_1,X_2],[X_3,[X_4,X_5]]],X_6]\,$}, then the
$4$-shift of $B$ is the bracket $B^{(4)}$ given by
$B^{(4)}=[[[X_5,X_6],[X_7,[X_8,X_9]]],X_{10}]\,${{.}}

A {\em semicanonical bracket} is an iterated  bracket $B$ which
coincides with  a $\mu$-shift of a \mbox{canonical} bracket for some nonnegative
integer $\mu$.
\if
 (For example, the bracket
$[[[X_4,X_5],X_6],[X_7,[X_8,X_9]]]$ is semicanonical, but
$\,\,[[[X_1,X_2],X_3],[X_4,[X_5,X_7]]]\,$,
$\,\,\,[[[X_1,X_2],X_4],[X_3,[X_5,X_6]]]\,$, \ and
$[[[X_1,X_2],X_1],[X_3,[X_4,X_5]]]$ are not.)
\fi

For every iterated bracket $B$ of degree $m>1$ there
exists a unique pair $(B_1,B_2)$ of brackets
such that $B=[B_1,B_2]$.
 The
pair $(B_1,B_2)$ is the {\em factorization} of $B$, and the brackets
$B_1$, $B_2$ are known, respectively, as the {\em left factor} and the
{\em right factor} of $B$.

If $B$ is semicanonical then both factors of $B$ are semicanonical as
well. If $B$ is canonical then the left factor of $B$ is canonical and
the right factor of $B$ {{i}}s semicanonical. Hence, if $B$ is canonical
of degree $m>1$ and $(B_1,\tilde B_2)$ is its factorization, there
exists a canonical bracket $B_2$ such that $\tilde
B_2=B_2^{(deg(B_1))}$, so that $B=[B_1,B_2^{(deg(B_1))}]$. We will
call the pair $(B_1,B_2)$ the {\em canonical factorization} of $B$.
{{F}}or example, if $B=[[X_1,X_2],[[X_3,X_4],X_5]]$, then the
factorization of $B$ is the pair
$\Big([X_1,X_2],[[X_3,X_4],X_5]\Big)$, and the canonical factorization
is the pair $([X_1,X_2],[[X_1,X_2],X_3])$.

\if If
$B=[[X_1,X_2],[X_3,X_4]]$, then the factorization of $B$ is the pair
$\Big([X_1,X_2],[X_3,X_4]\Big)$, and the canonical factorization is
the pair $\Big([X_1,X_2],[X_1,X_2]\Big)$.\fi

\medbreak\noindent  Let   $B=B_0^{(\mu)}$ be a semicanonical bracket, where  $B_0$ is  a canonical bracket of degree $m$. Let  $M$ be  a {{differentiable}} manifold and let ${\bf f}=(f_1,\ldots,f_{\nu})$ ($\nu\ge m+\mu$) be a finite sequence of vector fields on $M$. We use  $B({\bf f})$ to denote the expression obtained from $B$ by substituting $f_j$ for $X_{{j+\mu}}${{,}} $j=1,\dots,m$. If the manifold  $M$ and the vector fields $f_j$ are sufficiently regular, then we can regard
$B({\bf f})$ as an iterated  { Lie bracket}, in the {{classical}} sense. For instance, if $B= [[X_7,X_8],X_9]$ and ${\bf f} = (f,g,h,k{{)}}$ is a $4$-tuple of vector fields, then
$$ B({\bf f})= [[f,g],h] = [f,g]h - h[f,g] = fgh - gfh - hfg+ hgf.$$
Of course the regularity of the vector field $B({\bf f})$ depends on both the regularity of the {{vector}} fields $(f_1,\ldots,f_{\nu})$ and on the structure of $B$.

\section{Integrating  brackets}

\subsection{Bracket generating multi-flows}
 To simplify our exposition, when not otherwise specified {\it we shall assume the vector fields involved in the formulas are defined on a open subset $M\subseteq\rr^n$ and are  of class $C^\infty$}. However, the regularity question is obviously quite  important and will be treated in  Section~\ref{concludingsec}. In particular, vector fields will be assumed as regular as required by the structure of the involved formal brackets.

\begin{definition}\label{commutator}Let us  associate  with  a formal bracket  $B$ of degree $m$  and a $m$-tuple  ${\bf f}=(f_1,\dots,f_m)$ of vector fields  a  product $\Psi_B^{{\bf f}}(t_1,\ldots,t_m)$
of exponentials $e^{t_if_i}$, $i=1,\dots,m$. We proceed recursively:
\begin{itemize}\item[(i)]
 If $B=X_1$ (so that ${\bf f}$ consists of a single vector
field $f$)  we set
$$
\Psi_B^{{\bf f}}(t)\,\,\equaldef\,\,e^{tf}\,,
$$
i.e., for each $x\in M$ and each sufficiently small $t$,  $x\Psi_B^{{\bf f}}(t)= y(t)$ where (in the conventional notation)  $y(\cdot)$ is the solution to the Cauchy problem $\dot y =f(y),\,\,y(0)=x$.

\item[(ii)] If $deg(B)=m>1$ and $B=[B_1,B_2^{(m_1)} ]$ is the canonical
factorization of $B$, for  any
${\bf t}=(t_1,\ldots,t_m)$, we set
\begin{align*}
{\bf f}_{(1) }\equaldef(f_1,\ldots,f_{m_{1}}){{,}}\,\quad
{\bf f}_{(2) }\equaldef(f_{m_{1}+1},\ldots,f_{m}){{,}}\\
{\bf t}_{(1) }\equaldef(t_1,\ldots,t_{m_{1}}){{,}}\,\quad
{\bf t}_{(2) }\equaldef(t_{m_{1}+1},\ldots,t_{m}){{,}}
\end{align*}
and
$$
\Psi_B^{{\bf f}}(\bf t)\,\,\,\equaldef
\Psi_{B_1}^{{\bf
f}_{(1) }}({\bf t}_{(1) })\,\,
 \Psi_{B_2}^{{\bf f}_{(2) }}({\bf
t}_{{(2)}})\,
\left(\Psi_{B_1}^{{\bf
f}_{(1) } }({{{\bf t}_{(1)}}})\right)^{-1}\,
\, \left(\Psi_{B_2}^{{\bf f}_{( 2)} }({\bf
t}_{(2 )})\right)^{-1}.{{\footnotemark}}\,\,
\quad$$
\if
We call the so-obtained map ${\bf t}\mapsto\Psi_B^{{\bf f}}({\bf t})$ the {\it multiflow commutator} associated to $(B,{\bf f})$.
\fi
  \end{itemize}
  It is clear that  for every precompact subset $K\subset M$
there exist a neighborhood $U$ of $K$ and a $\delta>0$ such that  $ x\Psi_B^{{\bf f}}({\bf t})$ is defined for every $x\in U$ and ${\bf t}\in ]-\delta, \delta[^m$. However, when not otherwise stated,  we shall assume that vector fields $f_i$ are {\it complete}, meaning that  their flows $ (x, t) \mapsto x e^{t f_i}$  are well-defined for all $x\in M$ and $t\in \rr$. Obviously, the general case can be recovered by standard  ``cut-off function'' arguments{{.}} %(That is, one multiplies vector fields by cut-off functions, that is, compactly supported functions which $=1$ on their  the
\end{definition}

 \footnotetext{In \cite{BrBr11}, {{\cite{MoMo13-Almost-exponential}, \cite{MoMo12}}}, akin  maps, usually defined for brackets $B$ of the form  $[X_1,[X_2{{,\dots,}}[X_{m-1},X_m],{{\dots}}{{]]}}$ and   ${\bf t}$ %is replaced with
  of the form $(t,\dots,t)$, are called
  {\it quasiexponential}, {\it almost exponential}, or {\it approximate exponential maps}. }
 Let us illustrate the above definition  of $\Psi_B^{{\bf f}}({\bf t})$ by means of simple examples:
\begin{figure}[!ht]
\centering
\includegraphics[scale=0.2, width=6truecm,height=5truecm]{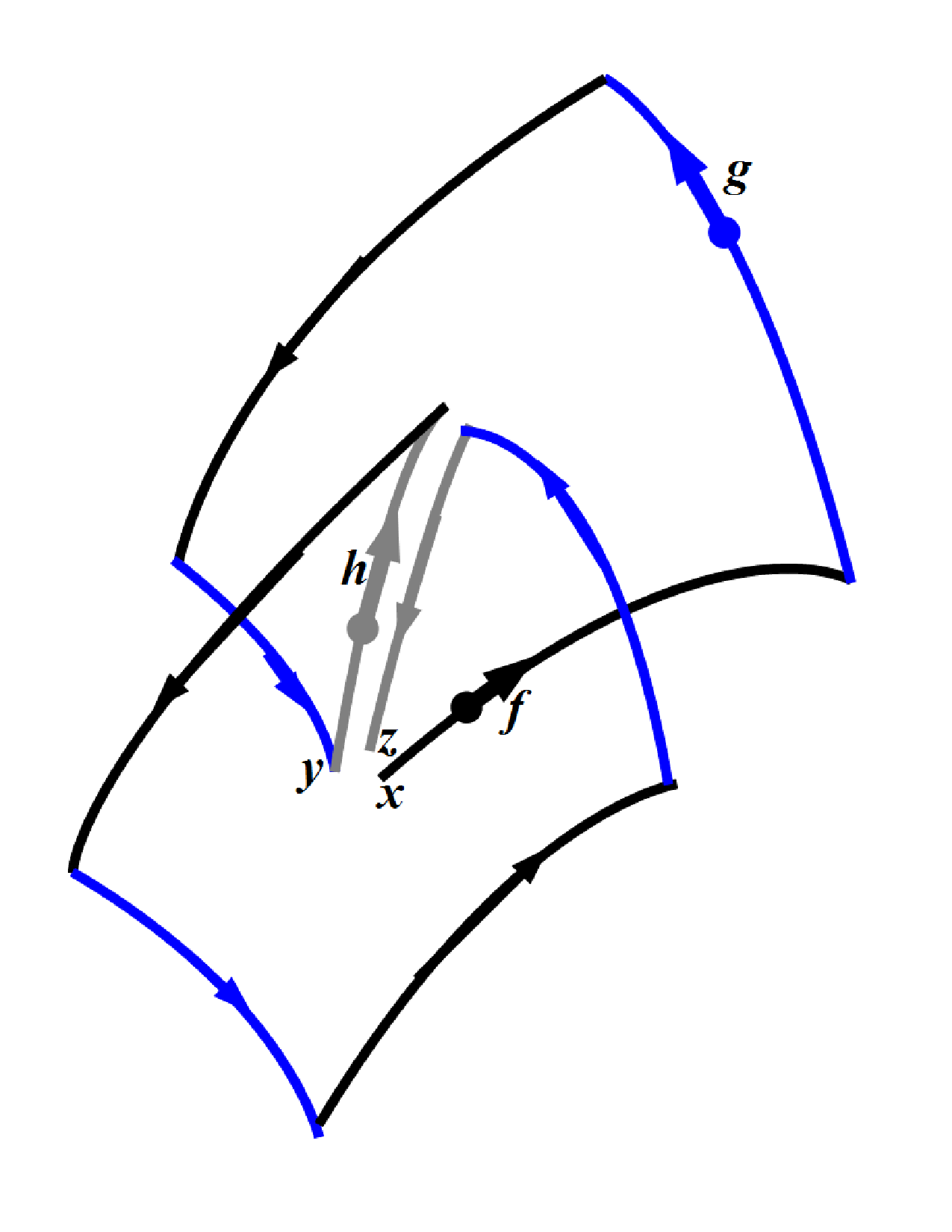}
\caption{$y= x\Psi_{[X_1,X_2]}^{(f,g)}(t_1,t_2),\,\,\, z= x\Psi^{(f,g,h)}_{[[X_1,X_2],X_3]}(t_1,t_2,t_3)${{.}}}
\end{figure}
\begin{enumerate}[(i)]
\item{} if $B=[X_1,X_2]$ and ${\bf f}=(f,g)$, then
$$
\Psi_B^{\bf f}(t_1,t_2)=e^{t_1f}e^{t_2g}e^{-t_1f}e^{-t_2g}\,;
$$

%\item{} if $B=[[X_1,X_2],X_3]$ and ${\bf f}=(f,g,h)$, then
%$$
%\Psi_B^{\bf f}(t_1,t_2,t_3)=e^{t_1f}e^{t_2g}e^{-t_1f}e^{-t_2g}
%e^{t_3h}e^{t_2g}e^{t_1f}e^{-t_2g}e^{-t_1f}e^{-t_3h}\,;
%$$
\item{} if $\textbf{}B=[X_1,[X_2,X_3]]$ and ${\bf f}=(f,g,h)$, then
\textbf{}$$
\Psi_B^{\bf f}(t_1,t_2,t_3)=e^{t_1f}
e^{t_2g}e^{t_3h}e^{-t_2g}e^{-t_3h}
e^{-t_1f}e^{t_3h}e^{t_2g}e^{-t_3h}e^{-t_2g}\,{{;}}
$$

\item{} if $\textbf{}B=[X_1,X_2],[X_3,X_4]]$ and ${\bf f}=(f,g,h,k)$, then
\textbf{}$$
\Psi_B^{\bf f}(t_1,t_2,t_3,t_4)=e^{t_1f}e^{t_2g}e^{-t_1f}e^{-t_2g} e^{t_3h}e^{t_4k}e^{-t_3h} e^{-t_4k}
e^{t_2g}e^{t_1f} e^{-t_2g}e^{-t_1f}e^{t_4k}e^{t_3h}e^{-t_4k}e^{-t_3h}
\,.
$$

\end{enumerate}

Observe that the number $N(B)$ of exponential factors of $\Psi_B^{\bf f}$ is given recursively by $N(B)=1$ if $deg(B) = 1$ and, for $m>1$,  $N(B) = 2(N(B_1) + N(B_2))$, where $[B_1,B_2]$ is the canonical factorization of $B$.

\subsection{Integrating  brackets}

  The {\it  integrating bracket} corresponding to $B$ and ${\bf f}$  will be defined as a $(2m-2)$-parameterized (continuous) vector field on $M$
 $$x\mapsto xB({\bf f})^{(t_1,\dots,t_{m_1-1},t_{m_1+1}, \dots{{,t_{m-1}, s_{m}}}, s_1,\dots,s_{m-1})}, $$ which, in particular,  (depends continuously on the  parameters $(t_1,\dots,t_{m_1-1},t_{m_1+1}, \dots{{,t_{m-1}, s_{m}}}, s_1\\,\dots,$ $s_{m-1})$) and verifies $ xB({\bf f})^{(0{{,}}\dots,0)} = xB({\bf f}) $. To begin with, let  us recall the notion of $Ad$ operator:
\begin{definition}  Let $U,V\subseteq M$ be open subsets and let   $\Phi:U\to V$ be a $C^r$ diffeomorphism ($r\geq 1$). If  $h$ is a vector field on ${{V}}$, $Ad_{\Phi} h$ is the vector field on $U$ defined by
$$
x\mapsto xAd_{\Phi} h \equaldef x\Phi h \Phi^{-1} \qquad \forall x\in U{{.}}
$$
\end{definition}
{{I}}n the conventional notation, the vector field  $Ad_{\Phi} h$ would be denoted by $x\mapsto {{D\Phi^{-1}|_{\Phi(x)}}}(h(\Phi(x)){{)}}$.
We remind that the $Ad$ operator is bracket preserving, namely
$$
Ad_{\Phi} [h_1,h_2] = [Ad_{\Phi} h_1, Ad_{\Phi} h_2],
$$
for all vector fields $h_1,h_2$.

Willing to define {\it integrating brackets} of degree greater than $2$, we cannot avoid  introducing a few more notation{{s}}. However, some examples following Definition \ref{def-expbra} should allow one to get  an intuitive idea of the bracket's construction.

If $d$ is any positive integer
 and $ \red{{\bf r} \in \rr^d} $ and  $\alpha\in \{1,\dots,d\}$, let us  use
$$\underset{\alpha}{\bf r}
$$
to denote the ${(d-1)}$-tuple obtained \red{from} $ {\bf r} $ by deleting the $\alpha$-{{$th$}} element. So, for instance,
if ${\bf r} = \red{(r_{21},r_{11},r_{40},r_{32}) \in \rr^4}$ one has
$$
\underset{1}{\bf r}= \red{(r_{11},r_{40},r_{32})}{{,}} \quad\underset{2}{\bf r}= \red{(r_{21},r_{40},r_{32})}, \quad 
\underset{3}{\bf r}= \red{(r_{21},r_{11},r_{32})}{{,}}\quad  \underset{4}{\bf r}= \red{(r_{21},r_{11},r_{40})}{{.}}
$$
For $\alpha,\beta\in  \{1,\dots,\red{d}\}$, $\alpha<\beta$, we also let $$\underset{\{\alpha,\beta\}}{\bf r}
 $$
 denote the ${(d-2)}$-tuple obtained by ${\bf r}$ by deleting the $\alpha$-{{$th$}} and $\beta$-{{$th$}} elements,
  so that, for instance, if ${\bf r} = \red{(r_{21},r_{11},r_{40},r_{32}) \in \rr^4}$,
$$
 %{\bf r}^{{2,4}}
  \underset{\{2,4\}}{\bf r} = \red{(r_{21},r_{40})}.
$$
When $d=1$,
we set
\bel{conv1}\nonumber %{\bf r}^{1}   
\underset{1}{ {\bf r}} 
\equaldef\emptyset.
\eeq
Also, if $d=2$, we set
\bel{conv2}\nonumber
 \underset{ \{ \alpha, \beta \} }{{\bf r}} \equaldef\emptyset.
\eeq

Let  $B$ be  an iterated bracket and let  $B=[B_1,B_2^{(m_1) }]$ be  its canonical factorization, with $deg (B) = m$, $deg (B_1) = m_1$, $deg( B_2)=m_2$, $m=m_1+m_2$. Let  ${\bf f} = (f_1,\dots,f_m)$ be  an $m$-tuple of vector fields .
We set, as before,
$$
{\bf t}=(t_1,\ldots,t_m){{,}}\qquad
{\bf t}_{(1) }\equaldef(t_1,\ldots,t_{m_{1}}){{,}}\,\qquad
{\bf t}_{(2) }\equaldef(t_{m_{1}+1},\ldots,t_{m}).
$$
Moreover, let
$${\bf s} = (s_1,\ldots,s_{m-1}{){,\qquad
{\bf s}_{(1) }\equaldef(s_1,\ldots,s_{m_{1}-1}),\,\qquad
{\bf s}_{(2) }\equaldef(s_{m_{1}+1},\ldots,s_{m-1}).}}$$

\begin{definition}[\bf Integrating bracket]\label{def-expbra}
We call    {\it integrating  bracket} ({\it corresponding to the pair $(B,{\bf f})$})  the $(2m-2)$-parameterized vector field  $ B ({\bf f})^{\left(\underset{{{\{m_1,m\}}}}{\bf t}{{, s_m}}, {\bf s}\right)}$  defined recursively as follows:
\begin{itemize}
\item[$\boxed{m=1}$]  If $m=1$ (so that $B=X_1$, ${\bf f} = f_1$),
we let
\bel{integratingone} B^{  \left(   \underset{{{\{m_1,m\}}}}{\bf t}{{, s_m}}, {\bf s}  \right)} ({\bf f}) = B^{\emptyset} ({\bf f}) \equaldef\displaystyle f_1. \eeq
\item[$\boxed{m>1}$] If $m=m_1+m_2\geq 2$, and  $B_1 = [B_{11}, B_{12}^{(m_{11}) } ]$,  $B_2 = [B_{21}, B_{22}^{(m_{21}) } ]$ are the canonical factorizations of $B_1$ and $B_2$, respectively, for some
$1\le m_{11} < m_1$, $1\le m_{21} < m_2 $,\begin{align}
   B ({\bf f})^{\left(\underset{{{\{m_1,m\}}}}{\bf t}{{, s_m}}, {\bf s}\right)} \equaldef  Ad_{\Psi_{B_2}^{{\bf f}_{(2)}}\left({{\underset{m-m_1}{{\bf t}_{(2)}}, s_m}}\right)\Psi_{B_1}^{{\bf f}_{(1)}}\left( \underset{{m_{1}}}{{\bf t}_{(1)}}  ,s_{m_1}\right) } \left[ \,B_1({\bf f}_{(1)})^{\left(\underset{\{{m_{11}},m_1\}}{{\bf t}_{(1)}},s_{m_1},{\bf s}_{(1)}\right)}\,,\,B_2 ({\bf f}_{(2)})^{\left(\underset{{{\red{\{m_{21},m-m_1\}}}}}{{{{\bf t}_{(2)}}}},\red{s_m,}{\bf s}_{(2)} \right) } \right]{{.}}\label{integratingnew}
     \end{align}\end{itemize}
 \end{definition}
 
 %{\color{blue} 
%\begin{definition}[\bf Integrating bracket]\label{def-expbra}
%We call    {\it integrating  bracket} ({\it corresponding to the pair $(B,{\bf f})$})  the $(2m-2)$-parameterized vector field  $ B ({\bf f})^{\left(\underset{{{\{m_1,m\}}}}{\bf t}{{, s_m}}, {\bf s}\right)}$  defined recursively as follows:
%\begin{itemize}
%\item[$\boxed{m=1}$]  If $m=1$ (so that $B=X_1$, ${\bf f} = f_1$),
%we let
%\bel{integratingone} B^{  \left(   \underset{{{\{m_1,m\}}}}{\bf t}{{, s_m}}, {\bf s}  \right)} ({\bf f}) = B^{\emptyset} ({\bf f}) \equaldef\displaystyle f_1. \eeq
%\item[$\boxed{m>1}$] If $m=m_1+m_2\geq 2$, and  $B_1 = [B_{11}, B_{12}^{(m_{11}) } ]$,  $B_2 = [B_{21}, B_{22}^{(m_{21}) } ]$ are the canonical factorizations of $B_1$ and $B_2$, respectively, for some
%$1\le m_{11} < m_1$, $1\le m_{21} < m_2 $,\begin{align}
%   B ({\bf f})^{\left(\underset{{{\{m_1,m\}}}}{\bf t}{{, s_m}}, {\bf s}\right)} \equaldef  Ad_{\Psi_{B_2}^{{\bf f}_{(2)}}\left({{\underset{m_2}{{\bf t}_{(2)}}, s_m}}\right)\Psi_{B_1}^{{\bf f}_{(1)}}\left( \underset{{m_{1}}}{{\bf t}_{(1)}}  ,s_{m_1}\right) } \left[ \,B_1({\bf f}_{(1)})^{\left(\underset{\{{m_{11}},m_1\}}{{\bf t}_{(1)}},s_{m_1},{\bf s}_{(1)}\right)}\,,\,B_2 ({\bf f}_{(2)})^{\left(\underset{{\{m_{21},m_2\}}}{{{{\bf t}_{(2)}}}},s_m,{\bf s}_{(2)} \right) } \right]{{.}}\label{integratingnew}
%     \end{align}\end{itemize}
% \end{definition}}
\begin{remark} When one of the indexes $m_1,m_2$ is equal one, formula \eqref{integratingnew} has  to be interpreted  as follows:
\begin{itemize}
 \item[${{\boxed{\underset{\displaystyle m_2=1}{\displaystyle m_1=1}}}}$] If $m_1=m_2=1$ (so $m=2$, $B=[X_1,X_2]$,  ${\bf f} = (f_1,f_2)${{)}},
  \begin{align}\nonumber B^{\left(\underset{{{\{m_1,m\}}}}{\bf t}{{, s_m}}, {\bf s} \right)} ({\bf f}) = [f_1, f_2]^{({{s_2}},s_1)}  \equaldef\displaystyle Ad_{e^{{{s_2}} f_2}e^{s_1f_1}} \left[f_1\,,\,f_2 \right]{{.}} \end{align}
\item[${{\boxed{\underset{\displaystyle m_2=1}{\displaystyle m_1>1}}}}$] If $m_1>1$ and $m_2=1$  (so $m_1+1 =m$, ${\bf f}_{(2)}= (f_m)$),
  \begin{align}\nonumber
 B^{\left(\underset{{{\{m_1,m\}}}}{\bf t}{{, s_m}}, {\bf s} \right)} ({\bf f})\equaldef  Ad_{e^{{{s_m}} f_m}\Psi_{B_{1}}^{{\bf f}_{(1)}}{{\left(\underset{m_1}{{\bf t}_{(1)}}, {s_{m_{1}}}\right)}}} \left[ \, B_1({\bf f}_{(1)})^{\left(\underset{\{{m_{11}},m_1\}}{{\bf t}_{(1)}},s_{m_1}, {\bf s}_{(1)} \right) }\,,  \,f_m \right]{{.}}
     \end{align}
\item[${{\boxed{\underset{\displaystyle m_2>1}{\displaystyle m_1=1}}}}$] If $m_1=1$ and $m_2>1$  (so $1+m_2 =m$, ${\bf f}_{(1) } = (f_1)$),

 \begin{align}\nonumber
    B^{\left(\underset{{{\{m_1,m\}}}}{\bf t}{{, s_m}}, {\bf s} \right)} ({\bf f})\equaldef  Ad_{\Psi_{B_{2}}^{{\bf f}_{(2)}}\left({{\underset{m-m_1}{{\bf t}_{(2)}}, s_m}}\right)e^{{{s_1}}f_1}} \left[f_1\,,B_2 ({\bf f}_{(2)})^{\left(\underset{{{{\red{\{m_{21},m-m_1\}}}}}}{{{{\bf t}_{(2)}}}},\red{s_m,}{\bf s}_{(2)} \right) }\right]{{.}}
     \end{align}

\end{itemize}
\end{remark}

\noindent{\bf Examples of integrating brackets: }\,\,

\begin{itemize}
\item[${{\boxed{\underset{\displaystyle m_2=1}{\displaystyle m_1=1}}}}$]

 If $B=[X_1,X_2]$, ${\bf f} =(f_1,f_2)$, then ($m=2$) {{and}}
\begin{align} 
&B ({\bf f})^{\left(\underset{{{\{m_1,m\}}}}{\bf t}{{, s_m}}, {\bf s}\right)}\nonumber\\
&= [f_1, f_2]^{( {{s_2}},s_1)}\nonumber\\
&= \displaystyle Ad_{e^{{{s_2}} f_2}e^{s_1f_1}} \left[f_1\,,\,f_2 \right]\nonumber\\
&= e^{{{s_2}} f_2}e^{s_1f_1} 
\left[f_1\,,\,f_2 \right]e^{-s_1f_1}e^{-{{s_2}} f_2}{{.}}
\end{align}

\item[${{\boxed{\underset{\displaystyle m_2=1}{\displaystyle m_1=2}}}}$]

 If $B=[[X_1,X_2],X_3]$, ${\bf f} =(f_1,f_2,f_3)$, (so ${\bf f}_{(1)}=(f_1,f_2)$ and ${\bf f}_{(2)}=(f_3))$, then
\begin{align} 
&B ({\bf f})^{\left(\underset{{{\{m_1,m\}}}}{\bf t}{{, s_m}}, {\bf s}\right)}\nonumber\\
&= [[f_1,f_2], f_3]^{(t_1,{{s_3}},s_1,s_2)}\nonumber\\
% B ({\bf f})^{(t_1,t_3,s_1,s_2)} =
&= Ad_{e^{{{s_3}} f_3}\Psi_{B_1}^{{{{\bf f}_{(1)}}}}(t_1,s_{2})} \left[[f_1,f_2]^{(s_2,s_1) }\,,\,  \,f_3  \right]\nonumber\\
&= Ad_{e^{{{s_3}}f_3}e^{t_1 f_1}e^{s_2f_2}e^{-t_1f_1}e^{-s_2 f_2}}
 \Big[ Ad_{e^{s_2 f_2}e^{s_1f_1}} \left[f_1\,,\,f_2 \right]  \,,\, f_3 \Big]\nonumber\\
&=e^{{{s_3}}f_3}e^{t_1 f_1}e^{s_2f_2}e^{-t_1f_1}e^{-s_2 f_2}\nonumber\\
&\quad\left[ \, e^{s_2 f_2}e^{s_1f_1}  \left[f_1\,,\,f_2 \right]e^{-s_1f_1}e^{-s_2 f_2} \,,\,  \,f_3 \right]\nonumber\\
&\quad e^{s_2 f_2}e^{t_1f_1}e^{-s_2f_2}e^{-t_1 f_1}e^{-{{s_3}}f_3}.\label{esempio2}
\end{align}
%$$
\item[${{\boxed{\underset{\displaystyle m_2=2}{\displaystyle m_1=2}}}}$]

 If $B=[[X_1,X_2],[X_3, X_4] ]$, ${\bf f} =(f_1,f_2,f_3,f_4)$, (so ${\bf f}_{(1)}=(f_1,f_2)$ and ${\bf f}_{(2)}=(f_3, f_4))$, then
\begin{align}
&B ({\bf f})^{\left(\underset{{{\{m_1,m\}}}}{\bf t}{{, s_m}}, {\bf s}\right)}\nonumber\\
&= [[f_1,f_2],[f_3, f_4] ]^{(t_1,t_3,{{s_4}},s_1,s_2,s_3)}\nonumber\\
&= Ad_{ e^{t_3 f_3}e^{{{s_4}}f_4} e^{-t_3 f_3}e^{-{{s_4}}f_4} e^{t_1f_1}e^{s_2f_2} e^{-t_1f_1}e^{-s_2f_2}}\Big[  Ad_{e^{s_2 f_2}e^{s_1f_1}}  \left[f_1\,,\,f_2 \right]\, ,\,  Ad_{e^{\red{s_4} f_4}e^{s_3f_3} } \left[f_3\,,\,f_4 \right] \Big]\nonumber\\
&=e^{t_3 f_3}e^{{{s_4}}f_4} e^{-t_3 f_3}e^{-{{s_4}}f_4} e^{t_1f_1}e^{s_2f_2} e^{-t_1f_1}e^{-s_2f_2}\nonumber\\
&\quad\Big[  e^{s_2 f_2}e^{s_1f_1}  \left[f_1\,,\,f_2 \right]e^{-s_1f_1}e^{-s_2 f_2}\red{\, ,\,}  e^{\red{s_4} f_4}e^{s_3f_3}  \left[f_3\,,\,f_4 \right]e^{-s_3f_3}e^{-\red{s_4} f_4} \Big]\nonumber\\
&\quad e^{s_2f_2}e^{t_1f_1} e^{-s_2f_2} e^{-t_1f_1} e^{{{s_4}}f_4} e^{t_3 f_3} e^{-{{s_4}}f_4} e^{-t_3 f_3}{{.}}
\end{align}
\end{itemize}
\begin{remark}{\rm  On one hand, we have made {{a small abuse}} of notation  by writing, for instance,
$ [[f_1,f_2],f_3]^{(t_1,{{s_3}},s_1,s_2)}$ instead of $B({\bf f})^{(t_1,{{s_3}},s_1,s_2)}$, with $B=[[X_1,X_2],X_3]$ and ${\bf f} = (f_1,f_2,f_3)$. We shall pursue with such notational simplifications whenever the danger of confusion is ruled out by the context.}
On the other hand let us point out that the definition of integrating bracket is based on the pair $(B,{\bf f})$ rather {{than}} on the vector field $B({\bf f})$. It may well happen that an integrating bracket $B^{{{\left(   \underset{{{\{m_1,m\}}}}{\bf t}{{, s_m}}, {\bf s} \right)}}} ({\bf f})$ of degree $m>2$ is  different from zero while the vector field $B({\bf f})$ (i.e.{{,}} the corresponding iterated Lie bracket) is identically equal to zero: see  Example~\ref{concluding-ex}.
\end{remark}

\subsection{Some basic properties of integrating brackets}

\begin{lemma}\label{lemma2} Let $f_1,f_2$ be  $C^2$ vector fields on $M$, and  let  $x\in M$ and $\delta_x>0$ such that
the integrating bracket  $x[f_1,f_2]^{{{(s_2}},s_1{{)}}}$ exists for   every $({{s_2}},s_1)\in [-\delta_x,\delta_x]^2$.\footnote{As remarked above, such a $\delta_x$ does exist, uniformly on precompact subsets of $M$.} Then
\bel{prop1}\nonumber x[f_1,f_2]^{({{s_2}},s_1)} = x[f_1, f_2] +  \int_0^{{{s_2}}} xAd_{e^{\tau f_2}}[f_2,[f_1,f_2]] d\tau  +     \int_0^{s_1}xAd_{e^{{{s_2}} f_2}e^{\sigma f_1}}[f_1,[f_1,f_2]] d\sigma{{.}}
\eeq
In particular, \bel{costante1}\nonumber x [f_1,f_2]^{({{s_2}},s_1)} = x[f_1, f_2]\qquad \forall x\in M,\quad \forall ({{s_2}},s_1)\in [-\delta_x,\delta_x]^2\eeq
if and only if \bel{nihilpotent}\nonumber
x[f_1,[f_1,f_2]] =  0 =  x[f_2,[f_1,f_2]]]\qquad \forall x\in M{{.\footnotemark}}\eeq
 \end{lemma}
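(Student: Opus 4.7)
My plan is to prove the integral formula by partial differentiation along an L-shaped path in the $(s_2,s_1)$-plane, and then read off the equivalence by evaluating at the origin.

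The starting point is the definition $[f_1,f_2]^{(s_2,s_1)} = Ad_{e^{s_2 f_2}e^{s_1 f_1}}[f_1,f_2]$, together with the fact that in the Agrachev--Gamkrelidze formalism used here, juxtaposition of diffeomorphisms reverses order under $Ad$, so
\[
Ad_{e^{s_2 f_2}e^{s_1 f_1}} = Ad_{e^{s_2 f_2}}\circ Ad_{e^{s_1 f_1}}.
\]
The fundamental differentiation rule I will use is the Lie-derivative identity $\tfrac{d}{ds}Ad_{e^{sf}}h = Ad_{e^{sf}}[f,h]$, valid whenever $f$ and $h$ have the appropriate regularity (here $C^2$ on $f_1,f_2$ guarantees that $[f_1,[f_1,f_2]]$ and $[f_2,[f_1,f_2]]$ exist as continuous vector fields).

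First I would compute the two partial derivatives separately. Holding $s_1$ fixed and differentiating in $s_2$ gives
\[
\frac{\partial}{\partial s_2}[f_1,f_2]^{(s_2,s_1)} = Ad_{e^{s_2 f_2}}\Bigl[f_2,\;Ad_{e^{s_1 f_1}}[f_1,f_2]\Bigr],
\]
which at $s_1=0$ simplifies to $Ad_{e^{s_2 f_2}}[f_2,[f_1,f_2]]$. Holding $s_2$ fixed and differentiating in $s_1$, and using that the outer $Ad_{e^{s_2 f_2}}$ merely composes with the pulled-back Lie derivative, gives
\[
\frac{\partial}{\partial s_1}[f_1,f_2]^{(s_2,s_1)} = Ad_{e^{s_2 f_2}e^{s_1 f_1}}[f_1,[f_1,f_2]].
\]
Next I apply the fundamental theorem of calculus along the path $(0,0)\to(s_2,0)\to(s_2,s_1)$: the first leg produces $\int_0^{s_2} Ad_{e^{\tau f_2}}[f_2,[f_1,f_2]]\,d\tau$, the second leg produces $\int_0^{s_1} Ad_{e^{s_2 f_2}e^{\sigma f_1}}[f_1,[f_1,f_2]]\,d\sigma$, and adding them to the base value $[f_1,f_2]^{(0,0)}=[f_1,f_2]$ delivers the claimed formula.

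The characterization of constancy is then immediate in one direction: if $[f_1,[f_1,f_2]]\equiv 0\equiv [f_2,[f_1,f_2]]$, both integrands vanish pointwise (since $Ad$ of the zero field is zero), so $[f_1,f_2]^{(s_2,s_1)}=[f_1,f_2]$ for all admissible $(s_2,s_1)$. For the converse, assuming constancy on $[-\delta_x,\delta_x]^2$, I evaluate $\partial/\partial s_1$ at $(s_2,s_1)=(0,0)$ to obtain $x[f_1,[f_1,f_2]]=0$, and evaluate $\partial/\partial s_2$ at $(s_2,s_1)=(0,0)$ to obtain $x[f_2,[f_1,f_2]]=0$; since the hypothesis is assumed for every $x\in M$ (with a local $\delta_x$), this yields the two nilpotency identities pointwise on $M$.

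The calculation is essentially bookkeeping once one is careful about the order-reversal in $Ad_{\Phi\Psi}=Ad_\Phi\circ Ad_\Psi$ inherent to the right-action convention, and about the fact that $Ad_{e^{\tau f_2}}f_2=f_2$ so the inner and outer forms of the $s_2$-derivative formula agree via the bracket-preserving property of $Ad$. That single sign/order check is really the only place a mistake could creep in; the rest is a one-variable fundamental theorem of calculus applied twice.
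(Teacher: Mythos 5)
Your proof is correct and follows essentially the same route as the paper: apply the fundamental theorem of calculus along the L-shaped path $(0,0)\to(s_2,0)\to(s_2,s_1)$ for the $C^1$ map $(s_2,s_1)\mapsto x[f_1,f_2]^{(s_2,s_1)}$, compute the two partial derivatives via the identity $\tfrac{d}{ds}Ad_{e^{sf}}h=Ad_{e^{sf}}[f,h]$, and then read off the converse by differentiating at the origin. The only quibble is a verbal one: you say juxtaposition ``reverses order'' under $Ad$ but then correctly write $Ad_{e^{s_2f_2}e^{s_1f_1}}=Ad_{e^{s_2f_2}}\circ Ad_{e^{s_1f_1}}$ (no reversal in the paper's left-action/right-to-left composition convention); since the formula you actually use is the right one, this does not affect the argument.
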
\footnotetext{Of course this condition is equivalent to the vanishing of  {\it all} brackets of degree $\geq 3${{.}}}
\begin{proof}
The Lemma is just an application to the $C^1$ map ${\Psi}:[-\delta_x,\delta_x]^2\to M$ of the following trivial fact:

{\it If ${\Psi}(0,0) = W\in\rr^n$, %for all $t_2\in [-\delta_x,\delta_x]$,
  then, for all $({{s_2}},s_1)\in [-\delta_x,\delta_x]^2$ one has
$$
{\Psi}({{s_2}},s_1) = W +\displaystyle  \int_0^{{{s_2}}} \frac{\partial {\Psi}(\tau,0)} {\partial \tau}\,d\tau +
\displaystyle  \int_0^{s_1}  \frac{\partial {\Psi}({{s_2}},\sigma)} {\partial \sigma}\,d\sigma{{.}}
$$}
In fact, setting $ {\Psi}({{s_2}},s_1)\equaldef x [f_1,f_2]^{({{s_2}},s_1)}$, one gets
$$
 \frac{\partial  {\Psi}({{\tau}},0)} {\partial \tau} =  xAd_{e^{\tau f_2}}[f_2,[f_1,f_2]]{{,}}\qquad
\frac{\partial  {\Psi}({{s_2}},\sigma)} {\partial \sigma}=xAd_{e^{{{s_2}} f_2}e^{\sigma f_1}}[f_1,[f_1,f_2]]{{.}}
$$
\end{proof}

\if
However  $[f_1,f_2]^{(t_2,s_1)} \neq [f_1, f_2]$ in general as the following example shows.
\begin{example}
In the Euclidean vector space $M = \rr^2$, consider the linear vector fields
\begin{gather}
 f_1 (x, y)=  A_1 \left(\begin{array}{c} x \\ y \end{array}  \right), \qquad    f_2 (x, y)=  A_2 \left(\begin{array}{c} x \\ y \end{array}
 \right)
\end{gather}
for all $(x, y) \in \rr^2$, where
\begin{gather}
A_1 = \left(
\begin{array}{cc}
 1 & 0 \\
 0 & 0 \\
\end{array}
\right),   \qquad
 A_2=
 \left(
\begin{array}{cc}
 0 & 1 \\
 1 & 0 \\
\end{array}
\right).
\end{gather}
Clearly $[f_1,f_2]$, $[f_1,f_2]^{(t_2,s_1)}$  are also linear with corresponding matrices
$[A_1, A_2]= A_2A_1-A_1A_2$, $[A_1, A_2]^{(t_2,s_1)} = e^{-t_2A_2}  e^{-s_1A_1} [A_1, A_2] e^{s_1A_1} e^{t_2A_2}$. It is easy to compute
\begin{gather}
[A_1, A_2]= \left(
\begin{array}{cc}
 0 & -1 \\
 1 & 0 \\
\end{array}
\right), \\ [A_1, A_2]^{(t_2,s_1)} =\left(
\begin{array}{cc}
 -\cosh (s_1) \sinh (2 t_2) & \sinh (s_1)-\cosh (s_1) \cosh (2 t_2) \\
 \cosh (s_1) \cosh (2 t_2)+\sinh (s_1) & \cosh (s_1) \sinh (2 t_2) \\
\end{array}
\right).
\end{gather}
Thus, in this case $[f_1,f_2]^{(t_2,s_1)} \neq [f_1, f_2]$.
\end{example}
\begin{corollary}[Antisymmetry]
Let  $f_1,f_2$ be as in Lemma~\ref{lemma2}, and assume that  { all} brackets of degree $\geq 3$ vanish identically. Then,
%if $B\equaldef [X_1,X_2]$ one has
%$$
%x[f_1,f_2] =xB_{(f_1,f_2)}{t_2,s_1}= -xB_{(f_2,f_1)}{t_2,s_1} =- x[f_2,f_1]
%$$
\[
x[f_1,f_2]=  x[f_1,f_2]^{(t_2, s_1)} = - x[f_2,f_1]^{(t_2, s_1)} = - x[f_2,f_1]
\]
for all $x\in M$ and
$|(t_2,s_1)|$ sufficiently small.
\end{corollary}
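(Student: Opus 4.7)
The plan is to reduce all four expressions in the chain to the two classical Lie brackets $x[f_1,f_2]$ and $x[f_2,f_1]$, and then close via the classical antisymmetry identity $[f_1,f_2]=-[f_2,f_1]$. The main tool is Lemma \ref{lemma2}, applied twice: once to the ordered pair $(f_1,f_2)$ and once to the swapped pair $(f_2,f_1)$.

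First I would verify that the nilpotency hypothesis is symmetric under $f_1\leftrightarrow f_2$. The assumption says that every iterated classical bracket of degree $\geq 3$ built from $f_1,f_2$ vanishes identically; by Lemma \ref{lemma2} this is equivalent to $x[f_1,[f_1,f_2]]=0=x[f_2,[f_1,f_2]]$ for all $x\in M$. Since $[f_2,f_1]=-[f_1,f_2]$, the same condition reads $x[f_2,[f_2,f_1]]=0=x[f_1,[f_2,f_1]]$, so the hypothesis of the ``in particular'' clause of Lemma \ref{lemma2} holds for both orderings of the pair.

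Applying Lemma \ref{lemma2} to $(f_1,f_2)$ immediately delivers the leftmost equality
\[
x[f_1,f_2]=x[f_1,f_2]^{(t_2,s_1)}.
\]
Applying it to $(f_2,f_1)$ (with parameters renamed accordingly) delivers the rightmost equality
\[
x[f_2,f_1]^{(t_2,s_1)}=x[f_2,f_1].
\]
The middle equality $x[f_1,f_2]^{(t_2,s_1)}=-x[f_2,f_1]^{(t_2,s_1)}$ is then obtained by collapsing both sides to their classical counterparts via the two instances of Lemma \ref{lemma2} just established, and invoking the classical antisymmetry $x[f_1,f_2]=-x[f_2,f_1]$. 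Concatenating the three steps gives the claimed chain for all $x\in M$ and $|(t_2,s_1)|$ small enough (the $\delta_x$ being the one from Lemma \ref{lemma2}, which is uniform on precompact subsets of $M$).

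No real obstacle appears: the whole statement is a packaging of Lemma \ref{lemma2} together with a one-line use of classical antisymmetry, and the only point that deserves a remark is the (trivial) observation that the degree-$3$ vanishing hypothesis is invariant under exchange of $f_1$ and $f_2$, so that the lemma can be invoked on both orderings without strengthening the $C^2$ regularity already in force.
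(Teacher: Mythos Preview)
Your proposal is correct and matches the paper's intended approach: the corollary is stated in the paper without an explicit proof, precisely because it is an immediate consequence of Lemma~\ref{lemma2} (applied to both orderings of the pair) together with the classical antisymmetry $[f_1,f_2]=-[f_2,f_1]$. Your observation that the degree-$\ge 3$ vanishing hypothesis is symmetric in $f_1,f_2$ is the only point worth spelling out, and you handle it correctly.
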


\fi

For  simplicity,  let us keep the notation
$$
\Psi(t_1,s_2)\equaldef \Psi_{[X_1,X_2]}^{(f_1,f_2)}(t_1,s_2){{.}}$$
As a consequence of Lemma \ref{lemma2}
 one gets:
\begin{proposition}\label{ad-br-cor} Consider vector fields $f_1,f_2$ of class  $C^3$.  Then, for every $x\in M$ and  every  vector field $f_3$ of class $C^1$ there exists $\delta_x>0$ such that  {{for every}} $(t_1,{{s_3}}, s_1,s_2)\in [-\delta_x,\delta_x]^4$, one has
\begin{align}
x[[f_1,f_2],f_3]^{(t_1,{{s_3}},s_1,s_2)} &= xAd_{e^{{{s_3}}f_3}\Psi(t_1,s_2)}\Big(\left[[f_1, f_2],f_3\right]+ \displaystyle
\int_0^{s_2}\Big[Ad_{ e^{\tau f_2}}[f_2,[f_1,f_2]], f_3\Big] d\tau\nonumber\\
&\quad +\displaystyle \int_0^{s_1}\Big[Ad_{e^{s_2 f_2}e^{\sigma f_1}}[f_1,[f_1,f_2]], f_3\Big] d\sigma\Big).\label{befor}
\end{align}

In particular, the following two statements are equivalent:
\begin{itemize}
\item[{{(i)}}]  For every vector field $f_3$ of class $C^1$ in a neighborhood of $x$, there is neighborhood $U$ of $x$ such that
\bel{self}y[[f_1,f_2],f_3]^{(t_1,{{s_3}},s_1,s_2)}
=
yAd_{e^{{{s_3}}f_3}\Psi(t_1,s_2)}\left[[f_1, f_2]\,,\,f_3\right]
\eeq
for all  $y\in U$ and all  $4$-tuples $(t_1,{{s_3}},s_1,s_2)$ sufficiently close to the origin.
\item[{{(ii)}}]  The identity
\bel{nihlpotent1}
y[f_1,[f_1,f_2]] =  0 =  y[f_2,[f_1,f_2]] \qquad
\eeq
holds true for every $y$ in a neighborhood of $x$.

\end{itemize}
\end{proposition}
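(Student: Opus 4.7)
The plan is to combine the definition of the integrating bracket for $B=[[X_1,X_2],X_3]$ with Lemma~\ref{lemma2} applied to the inner factor $[f_1,f_2]^{(s_2,s_1)}$, and then to reduce the equivalence to a test-vector-field argument.

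\textbf{Proof of \eqref{befor}.} First I would unfold Definition~\ref{def-expbra} in the case $m=3$, $m_1=2$, $m_2=1$ (this is exactly the worked example leading to \eqref{esempio2}) to get
\begin{equation*}
[[f_1,f_2],f_3]^{(t_1,s_3,s_1,s_2)} = Ad_{e^{s_3 f_3}\Psi(t_1,s_2)}\bigl[[f_1,f_2]^{(s_2,s_1)},\,f_3\bigr].
\end{equation*}
Since $f_1,f_2\in C^3$, Lemma~\ref{lemma2} holds and, read as a pointwise identity valid for every $x$, gives the equality of $C^2$ vector fields
\begin{equation*}
[f_1,f_2]^{(s_2,s_1)} - [f_1,f_2] = \int_0^{s_2} Ad_{e^{\tau f_2}}[f_2,[f_1,f_2]]\,d\tau + \int_0^{s_1} Ad_{e^{s_2 f_2} e^{\sigma f_1}}[f_1,[f_1,f_2]]\,d\sigma.
\end{equation*}
The Lie bracket $[\cdot,f_3]$ is $\mathbb{R}$-bilinear and commutes with integration against a parameter (a standard application of the Leibniz rule, valid because the integrands are $C^1$ in the parameter); bracketing both sides with $f_3$ and then applying $Ad_{e^{s_3f_3}\Psi(t_1,s_2)}$ produces \eqref{befor}.

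\textbf{Equivalence.} The direction (ii)$\Rightarrow$(i) is immediate: if $[f_1,[f_1,f_2]]$ and $[f_2,[f_1,f_2]]$ vanish identically on a neighborhood of $x$, the two integrals in \eqref{befor} vanish identically, leaving exactly \eqref{self}. For (i)$\Rightarrow$(ii) I subtract \eqref{self} from \eqref{befor}; since $Ad_{e^{s_3f_3}\Psi(t_1,s_2)}$ is a linear isomorphism on vector fields, the condition becomes
\begin{equation*}
\int_0^{s_2}[Ad_{e^{\tau f_2}}[f_2,[f_1,f_2]],f_3](y)\,d\tau + \int_0^{s_1}[Ad_{e^{s_2 f_2} e^{\sigma f_1}}[f_1,[f_1,f_2]],f_3](y)\,d\sigma = 0,
\end{equation*}
for all $y$ in a neighborhood of $x$, all small $(s_1,s_2)$, and every $f_3\in C^1$. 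Differentiating in $s_1$ at $(s_1,s_2)=(0,0)$ gives $[[f_1,[f_1,f_2]],f_3](y)=0$, and differentiating in $s_2$ at $(s_1,s_2)=(0,0)$ gives $[[f_2,[f_1,f_2]],f_3](y)=0$.

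\textbf{The main obstacle} is the last step: upgrading the identities $[V,f_3]\equiv 0$ for every $C^1$ vector field $f_3$ to the conclusion $V\equiv 0$ on a neighborhood of $x$, applied to $V=[f_1,[f_1,f_2]]$ and $V=[f_2,[f_1,f_2]]$. This is a test-vector-field argument: fixing $y_0$ near $x$ and using in local coordinates the linear test field $f_3(y)=A(y-y_0)$ with $A$ an arbitrary constant matrix, one has $f_3(y_0)=0$ and $Df_3\equiv A$, so $[V,f_3](y_0)=AV(y_0)$; letting $A$ vary over all matrices forces $V(y_0)=0$, which yields (ii). Everything else in the argument is bookkeeping around Definition~\ref{def-expbra} and the interchange of $[\cdot,f_3]$ with integration.
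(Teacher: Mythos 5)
Your proposal is correct and follows essentially the same route as the paper: unfold Definition~\ref{def-expbra}, substitute the expansion from Lemma~\ref{lemma2}, use bilinearity of $[\cdot,f_3]$ and the bracket-preserving property of $Ad$ to obtain \eqref{befor}, then for (i)$\Rightarrow$(ii) kill the $Ad$ operator, differentiate the integral identity at $(s_1,s_2)=(0,0)$, and conclude. The only difference is that you make explicit the final step (``$[V,f_3]\equiv 0$ for every $C^1$ test field $f_3$ forces $V\equiv 0$,'' proved by choosing $f_3(y)=A(y-y_0)$), which the paper compresses into the phrase ``Then, necessarily, one has\dots'' — a welcome clarification, not a deviation.
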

\begin{proof}
%{\sc Proof.}
 To get \eqref{befor} it is sufficient to recall the definition
$$
[[f_1,f_2],f_3]^{(t_1,{{s_3}},s_1,s_2)}
  =Ad_{e^{{{s_3}}f_3}\Psi(t_1,s_2)}
 \left[[f_1,f_2]^{(s_2,s_1)}\,,\,  \,f_3 \right]\,$$
{{a}}nd to apply Lemma \ref{lemma2}.
Moreover, clearly \eqref{nihlpotent1} implies \eqref{self} for every $f_3$. To prove the converse claim, observe that by  \eqref{befor} and \eqref{self}, taking $(t_1,{{s_3}})=(0,0)$, one gets
\bel{f1}\nonumber
0 =  \displaystyle
 \int_0^{s_2}y\Big[Ad_{ e^{\tau f_2}}[f_2,[f_1,f_2]], f_3\Big] d\tau +\displaystyle
 \int_0^{s_1}y\Big[Ad_{e^{s_2 f_2}e^{\sigma f_1}}[f_1,[f_1,f_2]], f_3\Big] d\sigma\Big),
\eeq
for any $y$ in a neighborhood of $x$, for all vector fields $f_3$ of class  $C^1$ near $x$, and for  all $(s_1,s_2)$ sufficiently close to the origin. By computing  the partial derivatives at   $(s_1,s_2)=(0,0)$ of the right-hand side, in view of the continuity of integrands  one obtains
$$
y\Big[[f_2,[f_1,f_2]], f_3\Big] =0,\qquad y\Big[[f_1,[f_1,f_2]], f_3\Big] =0
$$
for all vector fields $f_3$ of class  $C^1$ near $x$ .
Then, necessarily, one has
$$y[f_2,[f_1,f_2]] =0,\qquad y[f_1,[f_1,f_2]] =0. $$
\end{proof}

\begin{remark} The fact that an integrating bracket  corresponding to a pair $(B,{\bf f})$, with  $deg( B)>2$,  {\it is not}, in general,  of the form $Ad_{\phi}B({\bf f})$ (where $\phi$ depends on $m$ parameters) marks a crucial difference with the case when $B=[X_1,X_2]$, for which, instead, one actually has $$[X_1,X_2]({\bf f})^{({{s_2}},s_1)} =Ad_{\phi}[f_1,f_2],$$ with $\phi = {{e^{{{s_2}}f_2} e^{s_1f_1}}}$. Incidentally, this fact has strong consequences in the attempt of defining a (set-valued)  Lie bracket" $[[f_1,f_2],f_3]$ when $f_1,f_2$ are of class $C^{1,1}$ and $f_3$ is merely  Lipschitz continuous  (see the Introduction and Section \ref{concludingsec}).
\end{remark}
\begin{remark} It is trivial  to check that   condition %$[f_2,[f_1,f_2]]$,
\eqref{nihlpotent1} remains   necessary for \eqref{self} to hold  even if the latter  is verified just for $n$ vector fields that are linearly independent at each $y\in U$.  Of course,   identity \eqref{self}  may well be  true for a particular $f_3$ even if \eqref{nihlpotent1} is not verified, as it is immediately apparent by taking  $f_3\equiv 0$, in which case \eqref{self}  holds with both sides vanishing.  However,  unless \eqref{nihlpotent1} is verified,   it is not true that the vanishing of  $[[f_1,f_2],f_3]$ implies the vanishing of  $[[f_1,f_2],f_3]^{{{(}}t_1,{{s_3}},s_1,s_2{{)}}}$, as shown in Example~\ref{concluding-ex} below. As a byproduct of Theorem~\ref{integralth} below, this is connected with (the almost obvious  fact) that in general the condition  $[[f_1,f_2],f_3]\equiv 0$   {\it  does not}  imply $\Psi_B^{{\bf f}} = Id_M$.
\end{remark}

\section{Integral representation }

We are now ready to state the main result. In this section, $m$ will  stand for a positive integer, $B$ will  represent  a formal bracket of degree $m$, $m_1$ will be the degree of the first bracket in the canonical decomposition of $B$,  and ${\bf f} = (f_1,\dots,f_m)$ will be {{an $m$-tuple of}} vector fields.

\begin{theorem}[\bf Integral representation]\label{integralth}For every $m$-tuple  ${\bf t}= (t_1,\dots,t_m)\in \rr^m$ one has
\bel{intform}
x\Psi_{ B}^{\bf f} ({\bf t}) = x + \int_0^{t_1}\cdots\int_0^{t_m} x\Psi_{ B}^{\bf f}\left(\underset{m}{\bf t},s_m\right) { B}({\bf f})^{\left(\underset{{{\{{m_{1}},m\}}}}{\bf t}{{,s_m}},{\bf{s}}\right)}ds_1\dots ds_m{{,}}\,\,\,
\eeq
where, according to the notation introduced in  previous section, we have set  $$\underset{m}{\bf t}=(t_1,\dots,\dots,t_{m-1}), \qquad \underset{{{\{{m_{1}},m\}}}}{\bf t} =(t_1,\dots,t_{m_1-1},t_{m_1+1},\dots,t_{m-1}){{,}}\qquad {\bf s} = (s_1,\dots,s_{m-1}) .$$
\end{theorem}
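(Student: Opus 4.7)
The plan is to proceed by induction on $m = \deg(B)$. For the base case $m = 1$ one has $B = X_1$, $\Psi_B^{\bf f}(t_1) = e^{t_1 f_1}$, and the integrating bracket reduces to $f_1$ by \eqref{integratingone}, so the claim is just the fundamental theorem of calculus applied to the flow curve $s_1 \mapsto x e^{s_1 f_1}$. For the inductive step, write the canonical factorization as $B = [B_1, B_2^{(m_1)}]$ with $\deg B_1 = m_1$, $\deg B_2 = m_2 = m-m_1$, and abbreviate $\Psi_1 = \Psi_{B_1}^{{\bf f}_{(1)}}({\bf t}_{(1)})$, $\Psi_2 = \Psi_{B_2}^{{\bf f}_{(2)}}({\bf t}_{(2)})$, so that $\Psi_B^{\bf f}({\bf t}) = \Psi_1 \Psi_2 \Psi_1^{-1} \Psi_2^{-1}$.

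A short auxiliary induction shows that $\Psi_B^{\bf f}({\bf t}) = \mathrm{Id}$ whenever some $t_i$ vanishes: if $i \le m_1$ the inductive hypothesis forces $\Psi_1 = \mathrm{Id}$ and the group commutator collapses, and symmetrically when $i > m_1$. In particular, both sides of \eqref{intform} reduce to $x$ at $t_m = 0$, so by the fundamental theorem of calculus applied to the outermost integration variable it suffices to establish the differential identity
\begin{equation*}
\frac{\partial}{\partial t_m}\bigl(x\Psi_B^{\bf f}({\bf t})\bigr) \;=\; x\Psi_B^{\bf f}({\bf t})\cdot W(t_m),
\qquad
W(t_m) := \int_0^{t_1}\!\!\cdots\!\!\int_0^{t_{m-1}} B({\bf f})^{\bigl(\underset{\{m_1,m\}}{\bf t},\,t_m,\,{\bf s}\bigr)}\, ds_1\cdots ds_{m-1}.
\end{equation*}

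To compute the left-hand side, observe that only $\Psi_2$ depends on $t_m$, and it appears twice in the commutator. By the inductive hypothesis applied to $B_2$, differentiating its integral representation in its outermost variable yields $\partial_{t_m}(x\Psi_2) = x\Psi_2 \cdot \omega_2(t_m)$, where $\omega_2(t_m)$ is a $(m_2 - 1)$-fold integral in the variables $s_{m_1+1},\dots,s_{m-1}$ of $B_2({\bf f}_{(2)})^{(\underset{m_{21}}{{\bf t}_{(2)}},\,{\bf s}_{(2)})}$; note that $t_m$ enters this expression as the last entry of $\underset{m_{21}}{{\bf t}_{(2)}}$, which accounts for its occurrence there in Definition~\ref{def-expbra}. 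Applying the Leibniz rule to $\Psi_1 \Psi_2 \Psi_1^{-1} \Psi_2^{-1}$ and re-factoring each of the two resulting contributions through $\Psi_B^{\bf f}({\bf t})$ on the left gives, after cancellation,
\begin{equation*}
\partial_{t_m}\bigl(x\Psi_B^{\bf f}({\bf t})\bigr) = x\Psi_B^{\bf f}({\bf t})\cdot\bigl[\,\mathrm{Ad}_{\Psi_2\Psi_1}\omega_2(t_m) - \mathrm{Ad}_{\Psi_2}\omega_2(t_m)\,\bigr],
\end{equation*}
which for $m = 2$ and $\omega_2 = f_2$ reproduces the degree-$2$ computation underlying \eqref{formulaRS1}.

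The final step converts this $\mathrm{Ad}$-difference into the missing integrations over $s_1,\dots,s_{m_1}$. Since $\Psi_1 = \mathrm{Id}$ at $t_{m_1} = 0$ and $\partial_{s_{m_1}}(x\Psi_1(\ldots,s_{m_1})) = x\Psi_1(\ldots,s_{m_1})\cdot\omega_1(s_{m_1})$ by the inductive hypothesis for $B_1$, the identity $\partial_{s_{m_1}}\mathrm{Ad}_{\Phi\Psi_1(s_{m_1})} g = \mathrm{Ad}_{\Phi\Psi_1(s_{m_1})}[\omega_1(s_{m_1}),g]$ (combined with $\mathrm{Ad}$ preserving brackets) gives
\begin{equation*}
\mathrm{Ad}_{\Psi_2\Psi_1}\omega_2(t_m) - \mathrm{Ad}_{\Psi_2}\omega_2(t_m) = \int_0^{t_{m_1}} \mathrm{Ad}_{\Psi_2\Psi_1(\ldots,s_{m_1})} \bigl[\omega_1(s_{m_1}),\,\omega_2(t_m)\bigr]\, ds_{m_1}.
\end{equation*}
Substituting the inductive expressions for $\omega_1(s_{m_1})$ and $\omega_2(t_m)$ as iterated integrals of $B_1({\bf f}_{(1)})^{(\cdots)}$ and $B_2({\bf f}_{(2)})^{(\cdots)}$, and pulling the $\mathrm{Ad}$-conjugation and the outer Lie bracket inside these integrals, the integrand of $W(t_m)$ becomes exactly the right-hand side of the recursion~\eqref{integratingnew} that defines $B({\bf f})^{(\underset{\{m_1,m\}}{\bf t},\,t_m,\,{\bf s})}$, completing the induction. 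I expect the main obstacle to be the notational bookkeeping: tracking which indices are \textit{$t$-type} parameters versus \textit{$s$-type} integration variables through the nested recursion, and checking that the order $\Psi_2\Psi_1$ (rather than $\Psi_1\Psi_2$) in the $\mathrm{Ad}$-conjugation agrees with Definition~\ref{def-expbra}.
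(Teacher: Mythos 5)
Your proof is correct and essentially reproduces the paper's argument: the Leibniz-rule computation of $\partial_{t_m}(x\Psi_B^{\bf f}({\bf t}))$, the refactoring through $x\Psi_B^{\bf f}({\bf t})$ on the left to obtain $\mathrm{Ad}_{\Psi_2\Psi_1}\omega_2 - \mathrm{Ad}_{\Psi_2}\omega_2$, and the conversion of this $\mathrm{Ad}$-difference into an $s_{m_1}$-integral via $\partial_\sigma \mathrm{Ad}_{\Phi\Psi_1(\sigma)} g = \mathrm{Ad}_{\Phi\Psi_1(\sigma)}[\omega_1(\sigma),g]$ are precisely the content of the paper's recursive Proposition for the vector fields $V_B^{\bf f}$. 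The only difference is organizational: the paper routes the induction through the auxiliary Theorem~\ref{vectorth} about the explicitly named vector field $V_B^{\bf f}$ (your $W(t_m)$ and your $\omega_1,\omega_2$ are exactly $V_B^{\bf f}$, $V_{B_1}^{{\bf f}_{(1)}}$, $V_{B_2}^{{\bf f}_{(2)}}$), whereas you inline this into a single direct induction on Theorem~\ref{integralth} itself.
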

%\noindent {\bf Example}
\begin{example}
For brackets of degree $2, 3$,  see formulas \eqref{formulaRS1} and {{\eqref{lie3int}}}.

If $B=[[X_1,X_2],[X_3, X_4] ]$
(so ${\bf f}_{(1)}=(f_1,f_2)$ and ${\bf f}_{(2)}=(f_3, f_4))$,
then \eqref{intform} reads
\begin{align*}
&x\Psi_{ B}^{\bf f} ({\bf t}) \\
&= x e^{t_1{f_1}}e^{t_2{f_2}}e^{-t_1{f_1}}e^{-t_2 {f_2}} e^{t_3 {f_3}}e^{t_4 {f_4}}e^{-t_3
{f_3}}e^{-t_4 {f_4}}
e^{t_2 {f_2}}e^{t_1{f_1}}e^{-t_2
{f_2}}e^{-t_1{f_1}}e^{t_4 {f_4}}e^{t_3 {f_3}}e^{-t_4 {f_4}}e^{-t_3
{f_3}} \\
&= x +\displaystyle
\int_0^{t_1}\int_0^{t_2}\int_0^{t_3}\int_0^{t_4} x\Phi\Big[  e^{s_2f_2 }e^{s_1f_1}[f_1, f_2 ] e^{-s_1f_1} e^{-s_2f_2 }  \,,\,
e^{\red{s_4}f_4 }e^{s_3f_3}[f_3, f_4 ] e^{-s_3f_3} e^{-\red{s_4} f_4 } \Big]
\Gamma
  \,ds_1\,ds_2\,ds_3\,ds_4{{,}}
\end{align*}
where
$$
\Phi\equaldef
e^{t_1f_1} e^{t_2f_2}e^{-t_1f_1}e^{-t_2f_2}  e^{t_3f_3} e^{s_4f_4} e^{-t_3f_3}e^{-s_4f_4}  e^{t_2f_2}e^{t_1f_1}   e^{(s_2-t_2)f_2}  e^{-t_1f_1}  e^{-s_2f_2}
$$
and
$$
\Gamma\equaldef 
e^{s_2f_2}e^{t_1f_1} e^{-s_2f_2} e^{-t_1f_1} e^{{{s_4}}f_4} e^{t_3 f_3} e^{-{{s_4}}f_4} e^{-t_3 f_3}. 
%
% e^{s_4f_4}
% e^{t_3f_3}
% e^{-s_4f_4}
% e^{-t_3f_3}
%  e^{t_1f_1}  e^{t_2f_2}    e^{-t_1f_1}  e^{-t_2f_2}
%  e^{t_3f_3}
% e^{s_4f_4}
%e^{-t_3f_3}
%e^{-s_4f_4}
%e^{t_2f_2}  e^{t_1f_1}    e^{-t_2f_2}  e^{-t_1f_1}
$$

\end{example}
%\fi
%\begin{proof}
%Equality \eqref{intform} is a straightforward consequence of \eqref{intformV} and the following identity
%\[
%x\Psi_B^{{\bf f}}({\bf t}) -x =\int_0^{t_{m}} x\Psi_B^{{\bf f}}({\bf
%t}^{s_m}) V_B^{{\bf f}}({\bf t}^{s_m})\,ds_m\,,
%\]
%which in turn follows by the very definition of $V_B^{{\bf f}}$.
%\end{proof}
%\fi

 The proof of  Theorem \ref{integralth} will rely on an analogous result (see Theorem \ref{vectorth}    below)  concerning the  ($(t_1,\dots,t_{m})$-dependent)  {\it vector field }
$x\mapsto  xV_B^{\bf f}(t_1,\dots,t_{m})$ corresponding to the (${\bf t}$-dependent) {\it  local 1-parameter  action} $A$ defined as
\bel{action}\nonumber
(x,\tau)  \mapsto  A({\bf t},x,\tau) \equaldef x\left(\Psi_B^{{\bf
f}}({\bf t})\right)^{-1}\Psi_B^{{\bf
f}}\left(\underset{m}{\bf t},t_m+\tau\right){{.}}\quad
\eeq

\begin{definition}\label{def-V}  {{For}} every value of the parameter  ${\bf t}= (t_1,\dots,t_m) \in \rr^m$ let us define the vector field $x\mapsto xV_B^{\bf f}({\bf t})$ by setting, for every  $x\in M$,
 \begin{align}\label{d-V}
  xV_B^{\bf f}({\bf t})\equaldef\frac{\partial}{\partial \tau}{{\bigg|_{\tau=0}}}A({\bf t},x,\tau){{.}}{{\footnotemark}}
\end{align}
\end{definition}
\footnotetext{Notice that, for every  ${\bf t}$, $x\mapsto xV_B^{\bf f}({\bf t})$ is in fact a (intrisicly defined) vector field, for  $ A({\bf t},\cdot,\cdot)$ { is}   a true local {\it action}: this means that  $$ A({\bf t},x,0)=x  ,\qquad  A({\bf t},x,\tau_1+\tau_2) =  A({\bf t}, A({\bf t},x,\tau_1),\tau_2), $$
for all $\tau_1,\tau_2$ sufficiently small.}

%{\bf GNU: FORSE METTERE QUI UN DISEGNO}

\begin{theorem}\label{vectorth} Let $m$ be an integer {{greater than or equal to 1}}.
For every $m$-tuple ${\bf t}=(t_1,\dots,t_m)\in \rr^m$ one has
\bel{intformV}
xV_{B}^{\bf f} ({\bf t}) =  \int_0^{t_{1}}\cdots\int_0^{t_{m-1}} x{ B}({\bf f})^{\left(\underset{m_1}{\bf t},{\bf s} \right)}ds_1\dots ds_{m-1}\,\,\,,\eeq
where, as before,  $\underset{m_1}{\bf t}=(t_1,\dots,t_{m_1-1},t_{m_1+1},\dots,t_m)$, ${\bf s} = (s_1,\dots,s_{m-1}) .${{\footnotemark}}
\footnotetext{If $m=1$, so that $B =X_1$, ${\bf f} =f_1$, the formula above should be understood as  $xV_{B}^{\bf f}(t) = x B^t( {\bf f} ) = f_1$.}
\end{theorem}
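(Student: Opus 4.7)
The plan is to prove the theorem by induction on $m = \deg(B)$. The base case $m=1$ is immediate: $B = X_1$, ${\bf f}=(f_1)$, $\Psi_B^{\bf f}(t) = e^{tf_1}$, so $A(t,x,\tau) = x e^{\tau f_1}$ and hence $V_B^{\bf f}(t) = f_1 = B^{\emptyset}({\bf f})$, matching the formula (with the empty iterated integral interpreted as evaluation).

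For the inductive step, let $B = [B_1, B_2^{(m_1)}]$ be the canonical factorization with $\deg B_j = m_j$, and abbreviate $\Psi_j = \Psi_{B_j}^{{\bf f}_{(j)}}({\bf t}_{(j)})$, $W_2 = V_{B_2}^{{\bf f}_{(2)}}({\bf t}_{(2)})$, $\Phi_1(s) = \Psi_{B_1}^{{\bf f}_{(1)}}(\underset{m_1}{{\bf t}_{(1)}},s)$, $W_1(s) = V_{B_1}^{{\bf f}_{(1)}}(\underset{m_1}{{\bf t}_{(1)}},s)$, and $\tilde\Psi_2(s) = \Psi_{B_2}^{{\bf f}_{(2)}}(\underset{m_2}{{\bf t}_{(2)}},s)$. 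A direct algebraic manipulation from Definition~\ref{commutator} gives
\[ \Psi_B^{\bf f}({\bf t})^{-1}\Psi_B^{\bf f}(\underset{m}{\bf t}, t_m+\tau) = \Psi_2\bigl[\Psi_1 \Theta(\tau)\Psi_1^{-1}\Theta(\tau)^{-1}\bigr]\Psi_2^{-1}, \]
where $\Theta(\tau) := \Psi_2^{-1}\tilde\Psi_2(t_m+\tau)$ satisfies $\Theta(0) = \id$ with infinitesimal generator $\Theta'(0) = W_2$. Differentiating the commutator at $\tau=0$, and splitting the derivative between the two occurrences of $\Theta$, produces the velocity field $Ad_{\Psi_1}W_2 - W_2$; conjugation by $\Psi_2$ then yields
\[ V_B^{\bf f}({\bf t}) = Ad_{\Psi_2\Psi_1}W_2 - Ad_{\Psi_2}W_2. \]

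To convert this difference into an integral, note that $\Phi_1(0) = \id$ (since any $\Psi_{B'}^{{\bf f}'}$ with a vanishing parameter collapses to the identity, by a routine induction on the canonical factorization), so $F(s) := Ad_{\Psi_2\Phi_1(s)}W_2$ interpolates between $F(0) = Ad_{\Psi_2}W_2$ and $F(t_{m_1}) = Ad_{\Psi_2\Psi_1}W_2$. By the very definition of $W_1(s)$ as the infinitesimal generator in the last parameter, the curve $s \mapsto \Phi_1(s)(z)$ is an integral curve of the time-dependent vector field $W_1(s)$; hence $\Phi_1(\cdot)$ is the flow of $W_1(\cdot)$, and the classical Lie-derivative formula yields $\frac{d}{ds}Ad_{\Phi_1(s)}W_2 = Ad_{\Phi_1(s)}[W_1(s), W_2]$. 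Combining this with $Ad_{\Psi_2\Phi_1(s)} = Ad_{\Psi_2}\circ Ad_{\Phi_1(s)}$ and the fundamental theorem of calculus,
\[ V_B^{\bf f}({\bf t}) = \int_0^{t_{m_1}} Ad_{\Psi_2\Phi_1(s_{m_1})}[W_1(s_{m_1}), W_2]\, ds_{m_1}. \]
The inductive hypothesis applied to $B_1$ (at parameters $(\underset{m_1}{{\bf t}_{(1)}}, s_{m_1})$) and to $B_2$ (at parameters ${\bf t}_{(2)}$) expresses $W_1(s_{m_1})$ and $W_2$ as iterated integrals of lower-order integrating brackets. Pulling these integrals inside via bilinearity of the Lie bracket and linearity of $Ad_{\Psi_2\Phi_1(s_{m_1})}$ (which is independent of the new integration variables), and comparing with the recursive Definition~\ref{def-expbra}, one identifies the integrand exactly as $B({\bf f})^{(\underset{m_1}{\bf t}, {\bf s})}$, closing the induction.

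The main obstacle is the Lie-derivative identity for $\Phi_1(s)$: since $\Phi_1$ is the flow of a time-dependent, rather than a single, vector field, one must carefully extract from the definition of $V_{B_1}^{{\bf f}_{(1)}}$ (as generator of the last-parameter action) the fact that $s \mapsto \Phi_1(s)(z)$ is genuinely an integral curve of $W_1(s)$, so that the classical formula $\frac{d}{ds}(\Phi_1(s)^*W_2) = \Phi_1(s)^*[W_1(s), W_2]$ applies. Once this consistency check is carried out, the rest is bookkeeping—tracking indices through the recursive integrating-bracket formula and shuffling integrals via bilinearity.
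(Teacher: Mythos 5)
Your proposal is correct and follows essentially the same route as the paper: both establish the recursive identity $V_B^{\bf f}({\bf t}) = Ad_{\Psi_2\Psi_1}W_2 - Ad_{\Psi_2}W_2$, convert the difference to $\int_0^{t_{m_1}} Ad_{\Psi_2\Phi_1(\sigma)}[W_1(\sigma),W_2]\,d\sigma$ via the Lie-derivative/flow identity (which the paper derives directly from the definition of $V$ and the derivative of the inverse), and then close the induction by substituting the inductive hypotheses for $W_1(\sigma)$ and $W_2$ and matching against Definition~\ref{def-expbra}. The only (cosmetic) difference is your streamlined commutator reduction $\Psi_B^{\bf f}({\bf t})^{-1}\Psi_B^{\bf f}(\underset{m}{\bf t},t_m+\tau)=\Psi_2\bigl[\Psi_1\Theta(\tau)\Psi_1^{-1}\Theta(\tau)^{-1}\bigr]\Psi_2^{-1}$ versus the paper's term-by-term product-rule computation.
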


{\it Proof of Theorem \ref{integralth}.}
Equality \eqref{intform} is a straightforward consequence of \eqref{intformV} and the following identity
\[
x\Psi_B^{{\bf f}}({\bf t}) -x =\int_0^{t_{m}} x\Psi_B^{{\bf f}}\left(\underset{m}{\bf
t}, {s_m}\right) V_B^{{\bf f}}\left(\underset{m}{\bf
t}, {s_m}\right)\,ds_m\,,
\]
which in turn follows by the very definition of $V_B^{{\bf f}}$. 
\proofend

  \subsection{Special cases of Theorem \ref{vectorth}} We postpone the general proof of   Theorem \ref{vectorth} to the next subsection and {\it let us treat directly the cases when $m=1,2,3$. } Actually the case when  $B= [[X_1,X_2], X_3]$ is a bit technical,  but,  still,  we prefer to perform all calculations for the simple reason that   they are paradigmatic of those needed in   the general proof.
\begin{itemize}

\item[{{$\boxed{m=1}$}}]

{\rm
The case when $m=1$  Theorem~\ref{vectorth} is trivial, since
\bel{exvf1}xV_{X}^{f}(t) = x f.\eeq
\item[{{$\boxed{m=2}$}}]

 In the case when  $m=2$  the proof of Theorem~\ref{vectorth} is  straightforward as well. Indeed
\begin{align}
&xV_{[X_1,X_2]}^{{{(}}f_1,f_2{{)}}}(t_1,t_2)\nonumber\\
&=\nonumber
 \dfrac{\partial}{\partial \tau}{{\bigg|_{\tau=t_2}}}\big(xe^{t_2{{f_2}}}e^{t_1f_1} e^{-t_2f_2}e^{-t_1f_1} e^{t_1f_1}e^{\tau f_2}  e^{-t_1f_1} e^{-\tau f_2}\big)\nonumber\\
 &= xe^{t_2f_2}\left(e^{t_1f_1}f_2 e^{-t_1f_1} - f_2\right)e^{-t_2 f_2}\nonumber\\
&= xe^{t_2f_2}\left(\int_0^{t_1}e^{\rho f_1}[f_1,f_2] e^{-\rho f_1}d\rho\right) e^{-t_2 f_2}\nonumber\\
&= \displaystyle \int_0^{t_1}  x[f_1\, f_2]^{(t_2,s_1)}ds_1.\label{exvf}
\end{align}}

\item[{{$\boxed{m=3}$}}]

The case when $B= [[X_1,X_2], X_3]$: we have to show that
\bel{exvf-3}
xV_{[[X_1,X_2], X_3] }^{(f_1,f_2, f_3)}(t_1,t_2,t_3) =\displaystyle \int_0^{t_1}\int_0^{t_2}   x [[f_1\, f_2], f_3]^{(t_1,t_3,s_1, s_2)}ds_1ds_2.
\eeq
To prove \eqref{exvf-3}, let us shorten notation by setting  $\Psi(t_1, t_2) = e^{t_1f_1}e^{t_2f_2}e^{-t_1f_1}e^{-t_2f_2}$. One has
\begin{align}\label{V-m-3}
\begin{aligned}
 &xV_{[[X_1,X_2], X_3] }^{(f_1,f_2, f_3)}(t_1,t_2,t_3)\\
 &= \dfrac{\partial}{\partial \tau}{{\bigg|_{\tau=t_3}}}\left(
 xe^{t_3f_3}\Psi(t_1, t_2) e^{-t_3f_3} \Psi(t_1, t_2)^{-1}   \Psi(t_1, t_2) e^{ \tau f_3 } \Psi(t_1, t_2)^{-1} e^{ - \tau f_3 }
 \right)  \\
 &= \dfrac{\partial}{\partial \tau}{{\bigg|_{\tau=t_3}}}\left(
 x e^{t_3f_3}\Psi(t_1, t_2)  e^{ (\tau -t_3) f_3 } \Psi(t_1, t_2)^{-1} e^{ - \tau f_3 }
 \right) \\
 &= x e^{t_3f_3}\Psi(t_1, t_2)  f_3 \Psi(t_1, t_2)^{-1} e^{ - t_3 f_3 } -
 xe^{t_3f_3}\Psi(t_1, t_2)   \Psi(t_1, t_2)^{-1}f_3 e^{ - t_3 f_3 }    \\
 &=xe^{t_3f_3}\Psi(t_1, t_2)  f_3 \Psi(t_1, t_2)^{-1} e^{ - t_3 f_3 } -
 xe^{t_3f_3}f_3 e^{ - t_3 f_3 }  \\
 &=xe^{t_3f_3}\Psi(t_1, t_2)  f_3 \Psi(t_1, t_2)^{-1} e^{ - t_3 f_3 } -
 xe^{t_3f_3}\Psi(t_1, 0) f_3 \Psi(t_1,0)^{-1} e^{ - t_3 f_3 }\\
 &= \int_0^{t_2} \frac{\partial}{\partial \sigma }\left( x e^{t_3f_3}\Psi(t_1, \sigma)  f_3 \Psi( t_1, \sigma)^{-1} e^{ - t_3 f_3 }\right) d\sigma \\ &= \int_0^{t_2} x e^{t_3f_3} \frac{\partial}{\partial \sigma }\left( \Psi(t_1, \sigma)  f_3 \Psi( t_1, \sigma)^{-1} \right)e^{ - t_3 f_3 } d\sigma{{.}}
\end{aligned}
\end{align}
To compute the last integral let us begin by observing that,   in view of Definition~\ref{def-V}, one has
\bel{der-1}
\frac{\partial}{\partial \sigma }\Psi(t_1, \sigma)  = \Psi(t_1, \sigma)
V_{[X_1, X_2]}^{(f_1, f_2)}(t_1, \sigma){{.}}  %\Psi( t_1, \sigma)
\eeq
Furthermore,  let us  compute  the derivative $ \frac{\partial}{\partial \sigma }\left( \Psi( t_1, \sigma)^{-1} \right)$,  by differentiating the  relation
\[
 \Psi(t_1, \sigma) \Psi(t_1, \sigma)^{-1} = Id_M
\]
with respect to $\sigma$. We obtain
\begin{align*}
0 &=   \frac{\partial}{\partial \sigma }\left(\Psi(t_1, \sigma) \Psi( t_1, \sigma)^{-1} \right)\\
&=  \frac{\partial}{\partial \sigma }\left(\Psi(t_1, \sigma) \right) \Psi( t_1, \sigma)^{-1} +  \Psi(t_1, \sigma)\frac{\partial}{\partial \sigma }\left( \Psi( t_1, \sigma)^{-1} \right) \\
&= \Psi(t_1, \sigma) V_{[X_1, X_2]}^{(f_1, f_2)}(t_1, \sigma)  \Psi( t_1, \sigma)^{-1}  + \Psi(t_1, \sigma)\frac{\partial}{\partial \sigma }\left( \Psi( t_1, \sigma)^{-1} \right),
\end{align*}
from which we get
\bel{der-2}
\frac{\partial}{\partial \sigma }\left( \Psi( t_1, \sigma)^{-1} \right) = - V_{[X_1, X_2]}^{(f_1, f_2)}(t_1, \sigma)  \Psi( t_1, \sigma)^{-1} .
\eeq
Using \eqref{der-1}, \eqref{der-2}, we can continue the row of equalities in \eqref{V-m-3}, so obtaining
%\begin{align*}
%xV_{[[X_1,X_2], X_3] }^{(f_1,f_2, f_3)}(t_1,t_2,t_3) = \\ \int_0^{t_2} \Big( x e^{t_3f_3}  \frac{\partial}{\partial \sigma }\left( \Psi(t_1, \sigma)  \right) f_3 \Psi( t_1, \sigma)^{-1} e^{ - t_3 f_3 }  & + x e^{t_3f_3}  \Psi(t_1, \sigma)  f_3   \frac{\partial}{\partial \sigma }\left( \Psi( t_1, \sigma)^{-1} \right)e^{ - t_3 f_3 }  \Big)d\sigma =\\
% \int_0^{t_2} \Big( x e^{t_3f_3}  \Psi(t_1, \sigma) V_{[X_1, X_2]}^{(f_1, f_2)}(t_1, \sigma) f_3 \Psi( t_1, \sigma)^{-1} e^{ - t_3 f_3 } &
% \\ - ~x e^{t_3f_3}  \Psi(t_1, \sigma)  f_3 &V_{[X_1, X_2]}^{(f_1, f_2)}(t_1, \sigma)  \Psi( t_1, \sigma)^{-1}e^{ - t_3 f_3 }  \Big)d\sigma  \\
%= \int_0^{t_2} x e^{t_3f_3}   \Psi(t_1, \sigma) \left[ V_{[X_1, X_2]}^{(f_1, f_2)}(t_1, \sigma)   \,,  f_3  \right] &\Psi( t_1, \sigma)^{-1} e^{ - t_3 f_3 }d\sigma.
%\end{align*} 
\begin{align*}
&xV_{[[X_1,X_2], X_3] }^{(f_1,f_2, f_3)}(t_1,t_2,t_3)  \\
&= \int_0^{t_2} \Big( x e^{t_3f_3}  \frac{\partial}{\partial \sigma }\left( \Psi(t_1, \sigma)  \right) f_3 \Psi( t_1, \sigma)^{-1} e^{ - t_3 f_3 } \\
&\quad  + x e^{t_3f_3}  \Psi(t_1, \sigma)  f_3   \frac{\partial}{\partial \sigma }\left( \Psi( t_1, \sigma)^{-1} \right)e^{ - t_3 f_3 }  \Big)d\sigma \\ 
&= \int_0^{t_2} \Big( x e^{t_3f_3}  \Psi(t_1, \sigma) V_{[X_1, X_2]}^{(f_1, f_2)}(t_1, \sigma) f_3 \Psi( t_1, \sigma)^{-1} e^{ - t_3 f_3 }\\
&\quad-x e^{t_3f_3}  \Psi(t_1, \sigma)  f_3 V_{[X_1, X_2]}^{(f_1, f_2)}(t_1, \sigma)  \Psi( t_1, \sigma)^{-1}e^{ - t_3 f_3 }  \Big)d\sigma\\
&= \int_0^{t_2} x e^{t_3f_3}   \Psi(t_1, \sigma) \left[ V_{[X_1, X_2]}^{(f_1, f_2)}(t_1, \sigma)   \,,  f_3  \right] \Psi( t_1, \sigma)^{-1} e^{ - t_3 f_3 }d\sigma.
\end{align*} 
Then, using \eqref{exvf}, we get
\begin{align*}
&xV_{[[X_1,X_2], X_3] }^{(f_1,f_2, f_3)}(t_1,t_2,t_3)\\
&= \int_0^{t_2} x e^{t_3f_3}   \Psi(t_1, \sigma) \left[
\int_0^{t_1}  [f_1\, f_2]^{( \sigma,s_1)}ds_1
  \,,  f_3  \right] \Psi( t_1, \sigma)^{-1} e^{ - t_3 f_3 }d\sigma \\
  & = \int_0^{t_1} \int_0^{t_2}  x e^{t_3f_3}   \Psi(t_1, s_2) \left[
  [f_1\, f_2]^{(s_2,s_1)}
  \,,  f_3  \right] \Psi( t_1, s_2)^{-1} e^{ - t_3 f_3 }ds_1ds_2
\end{align*}
having set $\sigma =s_2$.
Taking into account~\eqref{esempio2}, this is precisely \eqref{exvf-3}.

\end{itemize}

\subsection{Proof of Theorem~\ref{vectorth}. }
%$$\,$$

Theorem~\ref{vectorth} will be proved  as a consequence of the following  result, which establishes  a recursive structure for the vector fields  $V_B^{{\bf f}}({\bf t})$.

\begin{proposition} If $B$ is a canonical bracket  of degree $m$  and   $B=[B_1,B_2^{(m_1)}]$ is its  canonical factorization,
 then, for all $x\in M$,
\bel{formula1}
xV_B^{{\bf f}}({\bf t})=\int_0^{t_{m_1}} x{{{Ad}}}_{\Psi_{B_2}^{{\bf f}_{(2)}}({\bf t}_{(2)})} {{{Ad}}}_{\Psi_{B_1}^{{\bf f}_{(1)}} \left( \underset{m_1}{{\bf t}_{(1)} },\sigma \right) }
\left(\left[V_{B_1}^{{\bf f}_{(1)}}\left(\underset{m_1}{{\bf t}_{(1)}},\sigma\right),
V_{B_2}^{{\bf f}_{(2)}}\left({\bf t}_{(2)}\right)\right]\right)\,d\sigma\,.
\eeq
\end{proposition}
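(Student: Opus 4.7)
The idea is to carry out in general the argument performed explicitly for $B=[[X_1,X_2],X_3]$ in the preceding subsection. Write $\Psi_i\equaldef\Psi_{B_i}^{{\bf f}_{(i)}}({\bf t}_{(i)})$ and $V_i\equaldef V_{B_i}^{{\bf f}_{(i)}}({\bf t}_{(i)})$ for $i=1,2$. Since the last time variable $t_m$ lies in ${\bf t}_{(2)}$, the canonical decomposition
\[
\Psi_B^{{\bf f}}({\bf t})=\Psi_1\Psi_2\Psi_1^{-1}\Psi_2^{-1}
\]
has only $\Psi_2$ depending on the shift parameter $\tau$ in the definition of $V_B^{{\bf f}}({\bf t})$; setting $\Psi_2(\tau)\equaldef\Psi_{B_2}^{{\bf f}_{(2)}}(\underset{m_2}{{\bf t}_{(2)}},t_m+\tau)$ and $Q(\tau)\equaldef\Psi_2^{-1}\Psi_2(\tau)$ (so that $Q(0)=Id_M$ and $\dot Q(0)=V_2$ by definition of $V_2$), the obvious cancellations in the definition of the ``action'' $A$ reduce it to
\[
A({\bf t},x,\tau)=x\,\Psi_2\,\Psi_1\,Q(\tau)\,\Psi_1^{-1}\,Q(\tau)^{-1}\,\Psi_2^{-1}.
\]

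Factoring the $\tau$-dependent part as the composition $\bigl[\Psi_1 Q(\tau)\Psi_1^{-1}\bigr]\cdot Q(\tau)^{-1}$ of two families of diffeomorphisms that are equal to $Id_M$ at $\tau=0$ and have velocities $Ad_{\Psi_1}V_2$ and $-V_2$ respectively, the operator Leibniz rule gives
\[
xV_B^{{\bf f}}({\bf t})\;=\;x\,Ad_{\Psi_2}\bigl(Ad_{\Psi_1}V_2-V_2\bigr).
\]
This is the direct analogue of the intermediate identity $xe^{t_3f_3}\bigl(\Psi(t_1,t_2)\,f_3\,\Psi(t_1,t_2)^{-1}-f_3\bigr)e^{-t_3f_3}$ obtained in the $m=3$ warm-up.

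To recast the parenthesis as an integral over $[0,t_{m_1}]$, a short induction on the canonical factorization of $B_1$ shows that
$\Psi_{B_1}^{{\bf f}_{(1)}}\bigl(\underset{m_1}{{\bf t}_{(1)}},0\bigr)=Id_M$: at each level, setting the last time to zero makes the innermost right factor the identity, whence the commutator of flows collapses. Writing $\Psi_{B_1}(\sigma)\equaldef\Psi_{B_1}^{{\bf f}_{(1)}}(\underset{m_1}{{\bf t}_{(1)}},\sigma)$ and $V_{B_1}(\sigma)\equaldef V_{B_1}^{{\bf f}_{(1)}}(\underset{m_1}{{\bf t}_{(1)}},\sigma)$, the fundamental theorem of calculus yields
\[
Ad_{\Psi_1}V_2-V_2=\int_0^{t_{m_1}}\frac{d}{d\sigma}Ad_{\Psi_{B_1}(\sigma)}V_2\,d\sigma.
\]
By definition of $V_{B_1}$ one has $\frac{d}{d\sigma}\Psi_{B_1}(\sigma)=\Psi_{B_1}(\sigma)V_{B_1}(\sigma)$, and differentiating $\Psi_{B_1}(\sigma)\Psi_{B_1}(\sigma)^{-1}=Id_M$ gives the companion identity $\frac{d}{d\sigma}\Psi_{B_1}(\sigma)^{-1}=-V_{B_1}(\sigma)\Psi_{B_1}(\sigma)^{-1}$ (exactly as in \eqref{der-2}). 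Applied to $Ad_{\Psi_{B_1}(\sigma)}V_2=\Psi_{B_1}(\sigma)V_2\Psi_{B_1}(\sigma)^{-1}$, the Leibniz rule collapses the integrand to $Ad_{\Psi_{B_1}(\sigma)}[V_{B_1}(\sigma),V_2]$; substituting back and pulling the outer $Ad_{\Psi_2}$ under the integral sign gives \eqref{formula1}.

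The principal challenge is not conceptual but notational: keeping the Agrachev--Gamkrelidze right-action operator calculus consistent when differentiating $\Psi_{B_1}(\sigma)^{\pm1}$ and $Ad_{\Psi_{B_1}(\sigma)}V_2$, and correctly identifying the velocities at the identity of the auxiliary families $Q(\tau)$, $Q(\tau)^{-1}$, $\Psi_1 Q(\tau)\Psi_1^{-1}$. Once these ingredients and the auxiliary fact $\Psi_{B_1}(\underset{m_1}{{\bf t}_{(1)}},0)=Id_M$ are in place, the computation is essentially the $m=3$ one, carried out symbolically on the general canonical factorization.
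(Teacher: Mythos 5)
Your proposal is correct and follows the same strategy as the paper's proof: both exploit that only the $\Psi_{B_2}$ factors depend on $t_m$ to reach the intermediate identity $V_B=Ad_{\Psi_2}(Ad_{\Psi_1}V_2-V_2)$, and both then convert the difference to an integral over $\sigma\in[0,t_{m_1}]$ using $\Psi_{B_1}^{{\bf f}_{(1)}}(\underset{m_1}{{\bf t}_{(1)}},0)=Id_M$ and the Leibniz rule for $Ad_{\Psi_{B_1}(\sigma)}$. Your auxiliary family $Q(\tau)=\Psi_2^{-1}\Psi_2(\tau)$ merely repackages the cancellation that the paper performs by direct computation on the four-factor product, so the two arguments are essentially identical.
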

\begin{proof}
{{Consider the map}} $t\mapsto x\Psi_B^{{\bf
f}}(t_1,\ldots,t_{m-1},t)^{-1}$.  Since for all $t$ one has
$$
x\Psi_B^{{\bf f}}(t_1,\ldots,t_{m-1},t) \Psi_B^{{\bf
f}}(t_1,\ldots,t_{m-1},t)^{-1}=x\,,
$$
one gets
\begin{align*}
0&=\frac{\partial}{\partial t}\Big(x\Psi_B^{{\bf f}}(t_1,\ldots,
t_{m-1},t)\Psi_B^{{\bf f}}(t_1,\ldots,t_{m-1},t)^{-1}\Big)\\
&=\frac{\partial}{\partial t}\Big( x\Psi_B^{{\bf
f}}(t_1,\ldots,t_{m-1},t)\Big)
\Psi_B^{{\bf f}}(t_1,\ldots,t_{m-1},t)^{-1}\\
&\quad+x\Psi_B^{{\bf f}}(t_1,\ldots,t_{m-1},t)
\frac{\partial}{\partial t}\Big(
\Psi_B^{{\bf f}}(t_1,\ldots,t_{m-1},t)^{-1}\Big)\\
&= x\Psi_B^{{\bf f}}(t_1,\ldots,t_{m-1},t)V_B^{{\bf
f}}(t_1,\ldots,t_{m-1},
t)\Psi_B^{{\bf f}}(t_1,\ldots,t_{m-1},t)^{-1}\\
&\quad+x\Psi_B^{{\bf f}}(t_1,\ldots,t_{m-1},t)
\frac{\partial}{\partial t}\Big( \Psi_B^{{\bf
f}}(t_1,\ldots,t_{m-1},t)^{-1}\Big)\,.
\end{align*}
If we write $y=x\Psi_B^{{\bf f}}(t_1,\ldots,t_{m-1},t)$, then we
have shown that
$$
yV_B^{{\bf f}}(t_1,\ldots,t_{m-1},
t)\Psi_B^{{\bf f}}(t_1,\ldots,t_{m-1},t)^{-1}\\
+\frac{\partial}{\partial t}\Big( y\Psi_B^{{\bf
f}}(t_1,\ldots,t_{m-1},t)^{-1}\Big)=0\,.
$$
As $x$ varies over $M$, so does $y$, and we can rewrite the above
using the variable $x$ instead of $y$, obtaining
$$
xV_B^{{\bf f}}(t_1,\ldots,t_{m-1},
t)\Psi_B^{{\bf f}}(t_1,\ldots,t_{m-1},t)^{-1}\\
+\frac{\partial}{\partial t}\Big( x\Psi_B^{{\bf
f}}(t_1,\ldots,t_{m-1},t)^{-1}\Big)=0\,,
$$
from which it follows that
\begin{align}
\frac{\partial}{\partial t}\Big(
x\Psi_B^{{\bf f}}(t_1,\ldots,t_{m-1},t)^{-1}\Big) = -xV_B^{{\bf f}}(t_1,\ldots,t_{m-1},t)
\Psi_B^{{\bf f}}(t_1,\ldots,t_{m-1},t)^{-1}\,.\label{bec-c}
\end{align}

{{Setting $m_2\equaldef m-m_1$}} and {{then b}}y \eqref{d-V}, \eqref{bec-c}  we obtain
\begin{align*}
&\frac{\partial}{\partial t} x\Psi_B^{{\bf f}}\left(\underset{m}{\bf t},t\right)\\
&=\frac{\partial}{\partial t}\Big(
x\Psi_{B_1}^{{\bf f}_{(1)}}({\bf t}_{(1)}) \Psi_{B_2}^{{\bf
f}_{(2)}}\left(\underset{{{m_2}}}{{\bf t}_{(2)}},t\right) \Psi_{B_1}^{{\bf f}_{(1)}}({\bf
t}_{(1)})^{-1}
\Psi_{B_2}^{{\bf f}_{(2)}}\left(\underset{{{m_2}}}{{\bf t}_{(2)}},t\right)^{-1}\Big)\\
&=x\Psi_{B_1}^{{\bf f}_{(1)}}({\bf t}_{(1)}) \Psi_{B_2}^{{\bf
f}_{(2)}}\left(\underset{{{m_2}}}{{\bf t}_{(2)}},t\right) V_{B_2}^{{\bf f}_{(2)}}\left(\underset{{{m_2}}}{{\bf t}_{(2)}},t\right) \Psi_{B_1}^{{\bf f}_{(1)}}({\bf t}_{(1)})^{-1}
\Psi_{B_2}^{{\bf f}_{(2)}}\left(\underset{{{m_2}}}{{\bf t}_{(2)}},t\right)^{-1}\\
&\quad-x\Psi_{B_1}^{{\bf f}_{(1)}}({\bf t}_{(1)}) \Psi_{B_2}^{{\bf
f}_{(2)}}\left(\underset{{{m_2}}}{{\bf t}_{(2)}},t\right) \Psi_{B_1}^{{\bf f}_{(1)}}({\bf
t}_{(1)})^{-1} V_{B_2}^{{\bf f}_{(2)}}\left(\underset{{{m_2}}}{{\bf t}_{(2)}},t\right)
\Psi_{B_2}^{{\bf f}_{(2)}}\left(\underset{{{m_2}}}{{\bf t}_{(2)}},t\right)^{-1}\\
&=x\Psi_{B_1}^{{\bf f}_{(1)}}({\bf t}_{(1)}) \Psi_{B_2}^{{\bf
f}_{(2)}}\left(\underset{{{m_2}}}{{\bf t}_{(2)}},t\right) \Psi_{B_1}^{{\bf f}_{(1)}}({\bf
t}_{(1)})^{-1}
\Psi_{B_2}^{{\bf f}_{(2)}}\left(\underset{{{m_2}}}{{\bf t}_{(2)}},t\right)^{-1}\\
&\qquad\Big(\Psi_{B_2}^{{\bf f}_{(2)}}\left(\underset{{{m_2}}}{{\bf t}_{(2)}},t\right)
\Psi_{B_1}^{{\bf f}_{(1)}}({\bf t}_{(1)}) V_{B_2}^{{\bf f}_{(2)}}\left(\underset{{{m_2}}}{{\bf t}_{(2)}},t\right) \Psi_{B_1}^{{\bf f}_{(1)}}({\bf t}_{(1)})^{-1}
\Psi_{B_2}^{{\bf f}_{(2)}}\left(\underset{{{m_2}}}{{\bf t}_{(2)}},t\right)^{-1}\Big)\\
&\quad-x\Psi_{B_1}^{{\bf f}_{(1)}}({\bf t}_{(1)}) \Psi_{B_2}^{{\bf
f}_{(2)}}\left(\underset{{{m_2}}}{{\bf t}_{(2)}},t\right) \Psi_{B_1}^{{\bf f}_{(1)}}({\bf
t}_{(1)})^{-1}
\Psi_{B_2}^{{\bf f}_{(2)}}\left(\underset{{{m_2}}}{{\bf t}_{(2)}},t\right)^{-1}\\
&\qquad\Big(\Psi_{B_2}^{{\bf f}_{(2)}}\left(\underset{{{m_2}}}{{\bf t}_{(2)}},t\right)
V_{B_2}^{{\bf f}_{(2)}}\left(\underset{{{m_2}}}{{\bf t}_{(2)}},t\right)
\Psi_{B_2}^{{\bf f}_{(2)}}\left(\underset{{{m_2}}}{{\bf t}_{(2)}},t\right)^{-1}\Big)\\
&=x\Psi_B^{{\bf f}}\left(\underset{m}{\bf t},t\right) \Big(\Psi_{B_2}^{{\bf
f}_{(2)}}\left(\underset{{{m_2}}}{{\bf t}_{(2)}},t\right) \Psi_{B_1}^{{\bf f}_{(1)}}({\bf
t}_{(1)}) V_{B_2}^{{\bf f}_{(2)}}\left(\underset{{{m_2}}}{{\bf t}_{(2)}},t\right)
\Psi_{B_1}^{{\bf f}_{(1)}}({\bf t}_{(1)})^{-1}
\Psi_{B_2}^{{\bf f}_{(2)}}\left(\underset{{{m_2}}}{{\bf t}_{(2)}},t\right)^{-1}\Big)\\
&\quad-x\Psi_B^{{\bf f}}\left(\underset{m}{\bf t},t\right) \Big(\Psi_{B_2}^{{\bf
f}_{(2)}}\left(\underset{{{m_2}}}{{\bf t}_{(2)}},t\right) V_{B_2}^{{\bf f}_{(2)}}\left(\underset{{{m_2}}}{{\bf t}_{(2)}},t\right) \Psi_{B_2}^{{\bf f}_{(2)}}\left(\underset{{{m_2}}}{{\bf t}_{(2)}},t\right)^{-1}\Big)\,,
\end{align*}
from which it follows that
\begin{align*}
V_B^{{\bf f}}\left(\underset{m}{\bf t},t\right) &=\Psi_{B_2}^{{\bf f}_{(2)}}\left(\underset{{{m_2}}}{{\bf t}_{(2)}},t\right) \Psi_{B_1}^{{\bf f}_{(1)}}({\bf t}_{(1)})
V_{B_2}^{{\bf f}_{(2)}}\left(\underset{{{m_2}}}{{\bf t}_{(2)}},t\right) \Psi_{B_1}^{{\bf
f}_{(1)}}({\bf t}_{(1)})^{-1}
\Psi_{B_2}^{{\bf f}_{(2)}}\left(\underset{{{m_2}}}{{\bf t}_{(2)}},t\right)^{-1}\\
&\quad-\Psi_{B_2}^{{\bf f}_{(2)}}\left(\underset{{{m_2}}}{{\bf t}_{(2)}},t\right)
V_{B_2}^{{\bf f}_{(2)}}\left(\underset{{{m_2}}}{{\bf t}_{(2)}},t\right) \Psi_{B_2}^{{\bf
f}_{(2)}}\left(\underset{{{m_2}}}{{\bf t}_{(2)}},t\right)^{-1}\,,
\end{align*}
so that
\bel{formula 1}
V_B^{{\bf f}}\left(\underset{m}{\bf t},t\right) ={{{Ad}}}_{\Psi_{B_2}^{{\bf
f}_{(2)}}\left(\underset{{{m_2}}}{{\bf t}_{(2)}},t\right)} \Biggl({{{Ad}}}_{\Psi_{B_1}^{{\bf
f}_{(1)}}({\bf t}_{(1)})}\Big( V_{B_2}^{{\bf f}_{(2)}}\left(\underset{{{m_2}}}{{\bf t}_{(2)}},t\right)\Big) -V_{B_2}^{{\bf f}_{(2)}}\left(\underset{{{m_2}}}{{\bf t}_{(2)}},t\right)\Biggr){{.}}\,
\eeq
For  any
$y\in M$,one has 
\begin{align*}
&\frac{\partial}{\partial\sigma} \Big(y\Psi_{B_1}^{{\bf f}_{(1)}}\left(\underset{m_1}{{\bf t}_{(1)}},\sigma\right) V_{B_2}^{{\bf f}_{(2)}}\left(\underset{{{m_2}}}{{\bf t}_{(2)}},t\right)
\Psi_{B_1}^{{\bf f}_{(1)}}\left(\underset{m_1}{{\bf t}_{(1)}},\sigma\right)^{-1}\Big)\\
&=y\Psi_{B_1}^{{\bf f}_{(1)}}\left(\underset{m_1}{{\bf t}_{(1)}},\sigma\right) V_{B_1}^{{\bf
f}_{(1)}}\left(\underset{m_1}{{\bf t}_{(1)}},\sigma\right) V_{B_2}^{{\bf f}_{(2)}}\left(\underset{{{m_2}}}{{\bf t}_{(2)}},t\right)
\Psi_{B_1}^{{\bf f}_{(1)}}\left(\underset{m_1}{{\bf t}_{(1)}},\sigma\right)^{-1}\\
&\quad-y\Psi_{B_1}^{{\bf f}_{(1)}}\left(\underset{m_1}{{\bf t}_{(1)}},\sigma\right) V_{B_2}^{{\bf f}_{(2)}}\left(\underset{{{m_2}}}{{\bf t}_{(2)}},t\right)
V_{B_1}^{{\bf f}_{(1)}}\left(\underset{m_1}{{\bf t}_{(1)}},\sigma\right)
\Psi_{B_1}^{{\bf f}_{(1)}}\left(\underset{m_1}{{\bf t}_{(1)}},\sigma\right)^{-1}\\
&=y\Psi_{B_1}^{{\bf f}_{(1)}}\left(\underset{m_1}{{\bf t}_{(1)}},\sigma\right)
\left[V_{B_1}^{{\bf f}_{(1)}}\left(\underset{m_1}{{\bf t}_{(1)}},\sigma\right), V_{B_2}^{{\bf
f}_{(2)}}\left(\underset{{{m_2}}}{{\bf t}_{(2)}},t\right)\right] \Psi_{B_1}^{{\bf f}_{(1)}}\left(\underset{m_1}{{\bf t}_{(1)}},\sigma\right)^{-1}{{.}}\,
\end{align*}
If we take $y=x\Psi_{B_2}^{{\bf f}_{(2)}}\left(\underset{{{m_2}}}{{\bf t}_{(2)}},t\right)$,
$x\in M$, and use the fact that
$$
\Psi_{B_1}^{{\bf f}_{(1)}}\left(\underset{m_1}{{\bf t}_{(1)}},\sigma\right) V_{B_2}^{{\bf
f}_{(2)}}\left(\underset{{{m_2}}}{{\bf t}_{(2)}},t\right) \Psi_{B_1}^{{\bf f}_{(1)}}\left(\underset{m_1}{{\bf t}_{(1)}},\sigma\right)^{-1}= V_{B_2}^{{\bf f}_{(2)}}\left(\underset{{{m_2}}}{{\bf t}_{(2)}},t\right)\qquad {\rm when}\quad \sigma=0\,,
$$
by \eqref{formula 1} we get \eqref{formula1}, if  $t=t_m$.  This concludes the proof. 
\end{proof}
%$$
%xV_B^{{\bf f}}({\bf t}^{\tau})=\int_0^{t_{m_1}} x{\rm
%Ad}_{\Psi_{B_2}^{{\bf f}_{(2)}}({\bf t}_{(2)}^{\tau})} {\rm
%Ad}_{\Psi_{B_1}^{{\bf f}_{(1)}}({\bf t}_{(1)}^\sigma)}
%\Big(\Big[V_{B_1}^{{\bf f}_{(1)}}(({\bf t}_{(1)}^{m_1},\sigma)),
%V_{B_2}^{{\bf f}_{(2)}}({\bf t}_{(2)}^{\tau})\Big]\Big)\,d\sigma\,,
%$$
%or, equivalently, \eqref{formula1}. This concludes the proof.\end{proof}

{\sc Proof of  Theorem~\ref{vectorth}.}
%\begin{proofof}
Let us proceed by induction on $m=deg(B)$. For $m=1$, the thesis simply follows from \eqref{exvf1}. For $m>1$
let us assume the inductive hypothesis for  each of the two subbrackets $B_1$, $B_2$ appearing in the canonical factorization $[B_1, B_2^{(m_1) }]$  of $B$. So, 
 \bel{sub1}
  xV_{B_1}^{{\bf f}_{(1) } } \left(\underset{m_1}{{\bf t}_{(1)}},{ s_{m_1}} \right)=  \int_0^{t_{1}}\cdots\int_0^{ t_{m_1-1}} x{ B_1}^{\left(\underset{\{m_{11}, m_1\}}{{\bf t}_{(1)}},{ s_{m_1}},{\bf s}_{(1)}  \right)}({\bf f}_{(1) }) ds_1\dots ds_{m_1-1}\,\,,\eeq
\bel{sub2}
  xV_{B_2}^{ {\bf f}_{(2) }} \left({\bf t}_{(2)} \right) =  \int_0^{t_{m_1+1}}\cdots\int_0^{t_{m-1}} x { B_2}^{\left(\underset{m_{21}}{{{{\bf t}_{(2)}}}},{\bf s}_{(2)} \right)}({\bf f}_{(2) })ds_{m_1+1}\dots ds_{m-1}\, .\eeq
   If $m_1 = 1$ ({{respectively,}} if  $m_1=m-1$, i.e., $m_2= m-m_1=1$ ), we mean that formula \eqref{sub1}  ({{respectively,}} \eqref{sub2}) reads $xV_{B_1}^{{\bf f}_{(1) } } {{\left(\underset{m_1}{{\bf t}_{(1)}},{ s_{m_1}} \right)}} = xV_{X_1}^{f_1} (s_1) =x f_1$  ({{respectively,}} $xV_{B_2}^{ {\bf f}_{(2) }} ({\bf t}_{(2)} ) =
 x V_{X_1}^{f_m} (t_m)  =x f_m$).

By applying  \eqref{formula1} we obtain
\begin{align*}
xV_B^{{\bf f}}({\bf t}) &= \displaystyle\int_0^{t_{1}} \cdots \int_0^{t_{m-1}}  x{{{Ad}}}_{\Psi_{B_2}^{{\bf f}_{(2)}}({\bf t}_{(2)})} {{{Ad}}}_{\Psi_{B_1}^{{\bf f}_{(1)}} \left(\underset{m_1}{{\bf t}_{(1)}},{ s_{m_1}} \right)}\nonumber\\
&\qquad\qquad\qquad\quad\left[{ B_1}^{\left(\underset{\{m_{11}, m_1\}}{{\bf t}_{(1)}},{ s_{m_1}},{\bf s}_{(1)}  \right)}( {\bf f}_{(1) })\,,
 { B_2}^{\left(\underset{m_{21}}{{{{\bf t}_{(2)}}}},{\bf s}_{(2)} \right)}({\bf f}_{(2) })
 \right]ds_1 \dots ds_{m-1}.
\end{align*}
Since, by Definition ~\ref{def-expbra},
\begin{align*}%\label{afbtv} %auxiliary formula for a bracket twisted value
 x{{{Ad}}}_{\Psi_{B_2}^{{\bf f}_{(2)}}({\bf t}_{(2)})} {{{Ad}}}_{\Psi_{B_1}^{{\bf f}_{(1)}} \left(\underset{m_1}{{\bf t}_{(1)}},{ s_{m_1}} \right)}
\left[{ B_1}^{\left(\underset{\{m_{11}, m_1\}}{{\bf t}_{(1)}},{ s_{m_1}},{\bf s}_{(1)}  \right)}( {\bf f}_{(1) })\,,
 { B_2}^{\left(\underset{m_{21}}{{{{\bf t}_{(2)}}}},{\bf s}_{(2)} \right)}({\bf f}_{(2) })
 \right] = xB^{ \left(\underset{m_1}{\bf t},{\bf s} \right) } ({\bf f}),
\end{align*}
 the proof is concluded.
\red{\proofend}
%\end{proofof}

%We shall prove directly the assertions concerning $m=2,3$. Afterwards  we shall prove the general statement by induction on the degree $m$.

\section{{{``}}$C^B$'' regularity }\label{concluding-reg}$\,$

 The  main results of this paper  remain valid  also when vector fields $f_i$ fail to be $C^{\infty}$, provided suitable $C^r$  hypotheses are assumed. To state them, given an $m$-tuple of vector fields ${\bf f} = (f_1, \ldots, f_{m})$ on $M$ and a canonical bracket $B$ of degree $m$, we  shall define the notion of ${\bf f}$ {\it of  class $C^B$}. %  (or ${\bf f} \in C^B$)
 Roughly speaking, it  means that all components $f_i${{,}} $i=1, \ldots, m$, possess  the minimal order of differentiation for which  $B( {\bf f})$ {{c}}an be computed everywhere an{{d}} is continuous.

As a byproduct of the integral representation provided in Theorem \ref{integralth} we get versions of the asymptotic formulas (and of   Chow-Rashevski's controllability theorem)  under quite low regularity hypotheses.
\if
valid under  classical results such as the asymptotic formulas and (a their corollary by means of the open mapping theorem) the  under minimal regularity assumptions for standard proofs to remain valid. Of course, the asymptotic formulas are also an immediate  consequence of integral formulas in Theorem~\ref{integralth}, but, as explained in the Introduction, a ``true'' application of these formulas will be presented in a forthcoming paper, where the regularity assumptions for vector fields $f_i$ will be relaxed considerably by requiring that their highest order derivatives needed to compute $B({\bf f})$ are only defined almost everywhere, locally bounded and measurable.

\fi

\subsection{Number of differentiations}
\medbreak\noindent To give a precise meaning to the expression {\it {{``all components $f_i${{,}} $i=1, \ldots, m$,}} possess the minimal order of differentiation for  which  $B( {\bf f})$ {{c}}an be computed everywhere an{{d}} is continuous"  } we need some formalism concerning the way    any bracket $B$ can be regarded as constructed in a
recursive way by iterated bracketings. In this recursive
construction, each subbracket $S$ undergoes a certain number of
bracketings until $B$ is obtained. When we plug in vector fields $f_j$
for the indeterminates $X_j$, each bracketing involves a
differentiation. So we will refer to this ``number of bracketings'' as
``the number of differentiations of $S$ in $B$,'' and use the
expression $\Delta(S;B)$ to denote it.
Naturally, this will only make sense for brackets $B$ and subbrackets
$S$ such that $S$ only occurs once as a substring of $B$. For more
general brackets, one must define a ``subbracket of $B$'' to be not
just a string that occurs as a substring of $B$ and is a bracket, but
as an {\em occurrence} of such a string, so that, {{e.g.}}, the two
occurrences of $X_1$ in $B=[X_1,[X_1,X_2]]$ count as different
subbrackets. Notice that the number of differentiations of $X_1$ in
$B$ is $1$ for the first one and $2$ for the second one. In order to
avoid this extra complication, we will confine ourselves to
semicanonical brackets, for which this problem does not arise, because
{ a subbracket $S$ of a semicanonical bracket $B$ can only occur
once as a substring of $B$.}

The precise definition of $\Delta(S;B)$,  $B$ being canonical,
and $S\in Subb(B)$, is by a backwards recursion on $S$:
\begin{itemize}
\item[{{(i)}}] $\Delta(B;B)\,\,\equaldef\,\,0$; \item[{{(ii)}}]
$\Delta(S_1;B)\,\,\equaldef\,\,\Delta(S_2;B)\,\,\equaldef\,\,
1+\Delta([S_1,S_2];B)\,$.
\end{itemize}
It is then easy to prove by induction that
$$
\Delta(S;B)=nrbr(S;B)-nlbr(S;B)\,,
$$
where $nrbr(S;B)$ {{are}} the number of right brackets that occur in $B$ to
the right of $S$, and $nlbr(S;B)$ {{are}} the number of left brackets that
occur in $B$ to the right of $S$.
\if
(This follows from the fact that
every time a bracket $[S_1,S_2]$ is created from brackets $S_1,S_2$,
and $S$ is a subbracket of $S_i${{,}} $i=1$ or $i=2$, then the identity
$$
nrbr(S;[S_1,S_2])-nlbr(S;[S_1,S_2]) =1+nrbr(S,S_i)-nlbr(S,S_i)
$$
holds.)
\fi
 For example, if
$ B=[X_3,[[[[X_4,X_5],$\\$X_6],X_7],[X_8,[X_9,X_{10}]]]] $
 then $\Delta([X_4,X_5];B)=4$.

\if
Hence $\Delta(X_j;B)$ is the number of right brackets
that occur in $B$ to the right of $X_j$, minus the number of left
brackets that occur in $B$ to the right of $X_j$. For example,
\begin{align*}
\Delta_1(X_1)&=0\,,\\
\Delta_1([X_1,X_2])=\Delta_1([X_1,X_2])&=1\,,\\
\Delta_1([[X_1,X_2],X_3])&=2\,,\\
\Delta_1([X_1,[X_2,X_3]])&=1\,,\\
\Delta_2([X_1,[X_2,X_3]])=\Delta_3([X_1,[X_2,X_3]])&=2\,.
\end{align*}
\fi
It is easy to see that, if $(B_1,B_2)$ is the canonical factorization
of $B$, and \mbox{$deg(B_1)=m_1$}, $deg(B_2)=m_2$, then
$$
\Delta(X_j;B)=\left\{\begin{array}{rll}
\Delta(X_j;B_1)+1&\quad{\rm if}\quad&j\in\{1,\ldots,m_1\}\\
\Delta(X_{j-m_1};B_2)+1&\quad{\rm
if}\quad&j\in\{m_1+1,\ldots,m_1+m_2\}\,.
\end{array}\right.
$$
The following trivial but important identity then holds: {{i}}f  $X_j $ is a subbracket of $S$ and $S$ is a subbracket $B$,
$$
\Delta_j(B)=\Delta_j(S)+\Delta(S;B)\quad {\rm if}\quad X_j \quad {\rm and}\,\,\,S\quad {\rm are\,\, subbrackets \,\,of}\,\, B.
$$

%\if Our definition
%will involve the notions of ``vector field of class $C^{k}$'' and
%``vector field of class $C^{k-1;L}$'' for general nonnegative integers
%$k$. The meaning of this concept is clear in all cases, with the only
%exception of that of a ``vector field of class $C^{-1;L}$.'' So we
%first fill this gap by providing the missing definition.

%\begin{definition}\label{defsval}
%A {\em vector field of class $C^{-1;L}$} is an upper semicontinuous
%set-valued vector field with compact convex nonempty values.\proofend
%\end{definition}

%\fi

\begin{definition}[Class $C^{B+k}$]\label{asindef}Let $m,\mu,\nu, k$ be nonnegative integers  such that
$\nu\ge m+\mu$, $m\ge 1$.
Given
 a semicanonical bracket $B$ of degree $m$ such that
$B=B_0^{(\mu)}$, $B_0$ being canonical, and a $\nu$-tuple ${\bf
f}=(f_1,\ldots,f_{\nu})$ of vector fields, 
we say that
 ${\bf f}$ {\em is of class $C^{B+k}$} if $f_j$ is of class
$C^{\Delta_j(B)+k}$ for each $j\in\{1\!+\!\mu,\ldots,m\!+\!\mu\}.$
 We also write ${\bf f}\in C^{B+k}$  to indicate that ${\bf f}$ is of class
$C^{B+k}$.  Finally, we simplify
the notation by just writing $C^{B}$  instead of $C^{B+0}$.
\end{definition}

\begin{remark} The above definition can be adapted in a obvious way to the case when $M$ is just a manifold of class $C^\ell$
for $\ell \ge 1+k+\max\Big\{\Delta_j(B):j\in\{1\!+\!\mu,\ldots,m\!+\!\mu\}\Big\}$.  
\end{remark}

For example, suppose that
$
B=[[X_1,X_2],[[X_3,X_4],X_5]]$ and $ {\bf
f}=(f_1,f_2,f_3,f_4,f_5)$.
Then
${\bf f}\in C^{B}$ if and only if $f_1,f_2,f_5\in C^{2}$ and $f_3,f_4\in C^{3}. $ It is then easy to verify the following result:

\if
For a second example, suppose that
$$
B=[[X_3,X_4],X_5]\qquad{\rm and}\qquad{\bf
f}=(f_1,f_2,f_3,f_4,f_5,f_6,f_7)\,.
$$
Then
\begin{align*}
{\bf f}\in C^{B\phantom{;L}}&\quad{\rm iff}\qquad \quad f_3,f_4\in
C^{2\phantom{;L}}\,\,\,
\,\,\,{\rm and}\,\,\,\,\,f_5\in C^{1}\,,\\
{\bf f}\in C^{B;L}&\quad{\rm iff}\qquad\quad f_3,f_4\in C^{2;L}
\,\,\,\,\,\,{\rm and}\,\,\,\,\,f_5\in C^{1;L}\,.
\end{align*}
\fi

\begin{proposition}
Assume that we are given data $B$, $m$, $B_0$, $\mu$, $k$, $\nu$, and an $\nu$-tuple ${\bf f}=(f_1,\ldots,f_\nu)$ as in Definition
\ref{asindef}. Let $(B_1,B_2)$ be the factorization of $B$. Then
 ${\bf f}\in C^{B+k}$ if and only if 
${\bf f}\in C^{B_1+k+1}$ and ${\bf
f}\in C^{B_2+k+1}$.
\end{proposition}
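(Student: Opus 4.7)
The plan is to unwind the definition of $C^{B+k}$ on both sides of the equivalence and then apply the recursive identity for $\Delta_j$ that was established just before the proposition. Since $B=B_0^{(\mu)}$ is semicanonical, the factorization pair $(B_1,B_2)$ consists of semicanonical brackets (as the excerpt recalls), so the notation $C^{B_i+k+1}$ is actually meaningful: write $B_1=B_{0,1}^{(\mu)}$ with $B_{0,1}$ canonical of degree $m_1$, and $B_2=B_{0,2}^{(m_1+\mu)}$ with $B_{0,2}$ canonical of degree $m_2=m-m_1$. In particular the variables appearing in $B_1$ are $X_{1+\mu},\dots,X_{m_1+\mu}$ and those appearing in $B_2$ are $X_{m_1+1+\mu},\dots,X_{m+\mu}$, and these two ranges partition the indices of $B$.

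Next I would record the shifted form of the recursion for $\Delta$. Translating the canonical identity stated in the excerpt through the $\mu$-shift gives
\[
\Delta_j(B)\;=\;\Delta_j(B_1)+1\quad\text{for } j\in\{1+\mu,\dots,m_1+\mu\},
\]
\[
\Delta_j(B)\;=\;\Delta_j(B_2)+1\quad\text{for } j\in\{m_1+1+\mu,\dots,m+\mu\}.
\]
These are just a rewriting of the original recursion using the fact that the symbol $X_j$ occurring in $B$ with $j$ in the first block is the very same symbol occurring in $B_1$, and similarly for the second block (the role of $j-m_1$ in the canonical statement is absorbed into the $(m_1+\mu)$-shift that realizes $B_2$).

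With this in hand, the equivalence is a bookkeeping exercise. By Definition \ref{asindef}, ${\bf f}\in C^{B+k}$ means $f_j\in C^{\Delta_j(B)+k}$ for every $j\in\{1+\mu,\dots,m+\mu\}$. Splitting this range according to the partition above and substituting the two displayed identities converts the requirement into the pair of conditions
\[
f_j\in C^{\Delta_j(B_1)+(k+1)}\ \ (1+\mu\le j\le m_1+\mu),\qquad f_j\in C^{\Delta_j(B_2)+(k+1)}\ \ (m_1+1+\mu\le j\le m+\mu),
\]
which by Definition \ref{asindef} applied to the semicanonical brackets $B_1$ and $B_2$ are exactly ${\bf f}\in C^{B_1+k+1}$ and ${\bf f}\in C^{B_2+k+1}$, respectively. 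Both implications of the equivalence follow at once.

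The only subtle point, and the one I would write out carefully, is the verification that the $\Delta$-recursion transfers correctly from the canonical setting to the semicanonical one; after that, there is no real obstacle, since the statement reduces to the trivial observation $\Delta_j(B)+k=\Delta_j(B_i)+(k+1)$ on each of the two index blocks.
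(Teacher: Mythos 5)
Your proof is correct and takes the natural approach. The paper states this proposition without a written proof (it is presented as an immediate consequence of the $\Delta$-recursion and Definition~\ref{asindef}), and your argument---carefully transferring the canonical $\Delta$-identity to the semicanonical setting via the $\mu$-shift and $(m_1+\mu)$-shift, partitioning the index range, and substituting $\Delta_j(B)=\Delta_j(B_i)+1$ so that $\Delta_j(B)+k=\Delta_j(B_i)+(k+1)$---is exactly the bookkeeping the authors leave implicit.
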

It then follows, by an easy induction on the subbrackets $S$ of $B$,
that one can define $S({\bf f})$ for every  subbracket $S$ of $B$ as a %proper
true vector field, by simply letting
$$
S({\bf f})=[S_1({\bf f}),S_2({\bf f})]\qquad{\rm if}\quad
S=[S_1,S_2]\,.
$$
The resulting vector field $S({\bf f})$ is of class
$ C^{\Delta(S;B)+k }$ as soon as
${\bf f}$ is of class $C^{B+k}$. In particular:

\begin{itemize}
\item[{{(i)}}] If ${\bf f}\in C^{B+k}$, then $B({\bf f})$ is a vector field
on $M$ of class $C^{k}$. %\item{} %If $k\ge 1$ and ${\bf f}$ belongs to
%$C^{B+k-1;L}$, then $B({\bf f})$ is a vector field on $M$ of class
%$C^{k-1;L}$.
%\end{itemize}
%\begin{itemize}
\item[{{(ii)}}] {{I}}f ${\bf f}\in C^{B}$ then $B({\bf f})$ is a
 continuous vector field.
%\item{} if ${\bf f}\in C^{B;L}$ then $B({\bf f})$ is a locally
%Lipschitz vector field.\proofend
\end{itemize}

\subsection{Representation with low regularity, asymptotic formulas, and  Chow-Rashevski's theorem}
One can easily verify that  $\left(x,\underset{{{\{m_1,m\}}}}{\bf t}{{, s_m}}, {\bf s} \right) \mapsto B({\bf f})^{\left(\underset{{{\{m_1,m\}}}}{\bf t}{{, s_m}}, {\bf s} \right)}$ is well-defined and continuous for any canonical bracket $B$ with canonical factorization $B= [B_1, B_2^{(m_1)}]$, where $1\le m_1 < deg(B)$, and any  ${\bf f} \in C^B$. Moreover, with obvious reinterpretation of the notation  one easily obtains the following low regularity version of  Theorem~\ref{integralth}:
\begin{theorem}[Integral representation with $C^B$ regularity]\label{integralth-low} Let $B$ be a canonical iterated bracket of $deg(B)=m\ge 1$, and let  ${\bf f}$ be an $m$-tuple of vector fields on  $M$ of class $C^B$. Then, for every $m-tuple $ ${\bf t}=(t_1,\dots,t_m)\in \rr^m$ one has
%\bel{intform}
%\begin{multlined}
\begin{align*}
x\Psi_{ B}^{\bf f} ({\bf t}) = x + \int_0^{t_1}\cdots\int_0^{t_m} x\Psi_{ B}^{\bf f}\left(\underset{m}{\bf t},s_m\right) { B}({\bf f})^{\left(\underset{{{\{m_1,m\}}}}{\bf t}{{, s_m}},{\bf{s}}\right)}ds_1\dots ds_m{{.}}\,\,\,
%\end{multlined}
\end{align*}
\end{theorem}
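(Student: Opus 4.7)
I would prove Theorem~\ref{integralth-low} by regularization of the vector fields, reducing to the smooth case already established in Theorem~\ref{integralth}. The plan is: fix a precompact neighborhood $U$ of $x$ and a $\delta>0$ such that $y\Psi_B^{\bf f}({\bf t})$ is defined for all $y\in U$ and ${\bf t}\in(-\delta,\delta)^m$; for each $\varepsilon>0$, mollify each component (after a suitable cut-off) to obtain smooth vector fields $f_j^\varepsilon\to f_j$ in $C^{\Delta_j(B)}_{\rm loc}$; then, by continuous dependence of flows on their generators in the $C^1_{\rm loc}$ topology, one may shrink $\delta$ so that $y\Psi_B^{{\bf f}^\varepsilon}({\bf t})$ is defined on the same domain for all sufficiently small $\varepsilon$. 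Applying Theorem~\ref{integralth} to the smooth $m$-tuple ${\bf f}^\varepsilon=(f_1^\varepsilon,\ldots,f_m^\varepsilon)$ gives
\begin{equation*}
x\Psi_B^{{\bf f}^\varepsilon}({\bf t})-x = \int_0^{t_1}\!\cdots\!\int_0^{t_m} x\Psi_B^{{\bf f}^\varepsilon}\!\!\left(\underset{m}{\bf t},s_m\right) B({\bf f}^\varepsilon)^{\left(\underset{\{m_1,m\}}{\bf t},s_m,{\bf s}\right)}ds_1\cdots ds_m,
\end{equation*}
and it remains to pass $\varepsilon\to 0$ in this identity.

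The left-hand side converges to $x\Psi_B^{\bf f}({\bf t})-x$ by continuous dependence of the iterated flow composition on the generating fields in $C^1_{\rm loc}$ (available because $\Delta_j(B)\ge 1$ for every $j$ whenever $m>1$; the case $m=1$ is trivial). The pre-multiplier $y\mapsto y\Psi_B^{{\bf f}^\varepsilon}(\underset{m}{\bf t},s_m)$ in the integrand converges uniformly in $(y,{\bf t},s_m)$ on compact sets by the same reasoning. The main obstacle, which is the heart of the argument, is verifying that the integrating bracket
\begin{equation*}
B({\bf f}^\varepsilon)^{\left(\underset{\{m_1,m\}}{\bf t},s_m,{\bf s}\right)}
\end{equation*}
converges to the corresponding object for ${\bf f}$ in $C^0_{\rm loc}$, uniformly over a compact cube of time parameters.

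I would establish this by induction on $\deg(B)$ via the stronger statement: \emph{if ${\bf g}^\varepsilon\to{\bf g}$ componentwise in $C^{B+k}_{\rm loc}$, then $B({\bf g}^\varepsilon)^{(\cdots)}\to B({\bf g})^{(\cdots)}$ in $C^k_{\rm loc}$, uniformly in the time parameters over compacts, for every $k\ge 0$.} The base case $\deg(B)=1$ is immediate from $B^{\emptyset}({\bf g})=g_1$. For the inductive step, write the canonical factorization $B=[B_1,B_2^{(m_1)}]$ and use the identity ${\bf f}\in C^{B+k}\Longleftrightarrow {\bf f}_{(1)}\in C^{B_1+k+1}$ and ${\bf f}_{(2)}\in C^{B_2+k+1}$: the inductive hypothesis applied at level $k+1$ delivers $C^{k+1}_{\rm loc}$ convergence of both sub-integrating brackets $B_1({\bf f}_{(1)}^\varepsilon)^{(\cdots)}$ and $B_2({\bf f}_{(2)}^\varepsilon)^{(\cdots)}$; their Lie bracket, which involves one differentiation, then converges in $C^k_{\rm loc}$; finally, the outer $\mathrm{Ad}$-conjugation by $\Psi_{B_2}^{{\bf f}_{(2)}^\varepsilon}(\cdots)\,\Psi_{B_1}^{{\bf f}_{(1)}^\varepsilon}(\cdots)$ preserves $C^k_{\rm loc}$ convergence because these flows themselves converge in $C^{k+1}_{\rm loc}$ (standard regularity of ODE solutions in their vector field). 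Applying the statement with $k=0$ yields the required uniform convergence of the integrand; combined with uniform boundedness on the compact domain, dominated convergence then passes to the limit in the integral and delivers the formula.
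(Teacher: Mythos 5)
Your proposal is correct, but it takes a genuinely different route from what the paper does. The paper's argument (only sketched in Section~4) is a \emph{direct} one: it observes that $C^B$ regularity is precisely what makes the integrating bracket $\left(x,\underset{\{m_1,m\}}{\bf t},s_m,{\bf s}\right)\mapsto B({\bf f})^{(\cdots)}$ well-defined and continuous, and then claims that the inductive proof of Theorem~\ref{vectorth} (via the recursive Proposition computing $V_B^{\bf f}$) and the final integration step of Theorem~\ref{integralth} carry over verbatim, since every differentiation appearing there is licensed by the $C^B$ hypothesis. You instead reduce to the smooth case by mollification and pass to the limit, which is a legitimate and conceptually clean alternative. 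The interesting technical content of your route is the stability lemma proved by induction on $\deg(B)$: that $C^{B+k}_{\rm loc}$ convergence of the vector fields forces $C^k_{\rm loc}$ convergence of the integrating brackets, uniformly in the time parameters. This exploits exactly the recursive characterization ${\bf f}\in C^{B+k}\Longleftrightarrow{\bf f}_{(1)}\in C^{B_1+k+1}\ \wedge\ {\bf f}_{(2)}\in C^{B_2+k+1}$ (the Proposition preceding the Theorem) together with the facts that the Lie bracket and the $\mathrm{Ad}$-conjugation each cost one derivative, and that flows of $C^{r}$ vector fields depend $C^{r}$-continuously on their generators. One should note that the bound $\Delta_j(B)\ge 1$ for $m>1$, which you use to get $C^1_{\rm loc}$ convergence of the composite flow $\Psi_B^{{\bf f}^\varepsilon}$, is indeed correct, since $\Delta_j(B)=1+\Delta_j(B_i)\ge 1$ as soon as $\deg(B)\ge 2$. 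In the end the trade-off is clear: the paper's route is shorter because the smooth proof is already structured to survive $C^B$ regularity, while yours is a black-box reduction that requires the continuous-dependence machinery but never re-examines the proof of the smooth case. Either is acceptable; if anything, your argument makes the role of the $C^{B+k}$ hierarchy more transparent.
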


As an  almost obvious  byproduct we get the following asymptotic {{formula}} under low regularity assumptions.
 \if
 \footnote{We do not mean, of course, that such asymptotic formulas and the consequent  controllability theorem  cannot be established without the help of integral representations.}
 \fi
%A first and major application consists in the derivation of asymptotic formulas.

%As usual we fix a differential manifold $M$, (which, when necessary, is assumed to by some open subset of $\rr^n$, $n\ge 1$)  $B$ a canonical bracket of $deg(B)=m \ge 1$,
%and ${\bf f}=(f_1, \ldots, f_m)$ an $m$-tuple of vector fields on $M$.

\begin{theorem}[Asymptotic {{formula}}]\label{asyformcor}
%Let $\Omega$ be an open subset of $\rr^n$,  $B\in ITB_{can}({\bf X})$, and ${\bf f} \in VF^B(\Omega)$ such that each component of ${\bf f}$ is at least of class $C^1$. Then for all $x\in \Omega$
Let $B$ be a canonical iterated bracket of $deg(B)=m\ge 1$, and ${\bf f}$ an $m$-tuple of vector fields on $M$. Assume that ${\bf f}\in C^B$. Then
we have
\begin{align}\label{asyform}
x\Psi_B^{{\bf f}}(t_1, \dots, t_m) = x + t_1 \cdots t_m  B({\bf f})(x) + o(t_1 \cdots t_m )\nonumber
\end{align}
as ${{\|}}(t_1, \ldots,  t_m){{\|}} \to 0$. %, where $m = deg(B)$.
\end{theorem}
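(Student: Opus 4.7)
The strategy is to extract the asymptotic formula directly from the exact integral representation of Theorem~\ref{integralth-low}. The key point is that when ${\bf t}$ is close to zero, the integrand in the iterated integral is, by joint continuity, close to its value at the origin, which is precisely $xB({\bf f})$; averaging over a small box then yields the dominant term $t_1\cdots t_m\cdot xB({\bf f})$ with a remainder of order $o(t_1\cdots t_m)$.

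First I apply Theorem~\ref{integralth-low} and set
\[
G(s_1,\dots,s_m;{\bf t}) \equaldef x\Psi_B^{\bf f}\bigl(\underset{m}{\bf t},s_m\bigr)\, B({\bf f})^{\bigl(\underset{\{m_1,m\}}{\bf t},s_m,{\bf s}\bigr)},
\]
so that $x\Psi_B^{\bf f}({\bf t}) - x = \int_0^{t_1}\!\!\cdots\int_0^{t_m} G(s_1,\dots,s_m;{\bf t})\,ds_1\cdots ds_m$. The hypothesis ${\bf f}\in C^B$ is exactly what was invoked in the paragraph preceding Theorem~\ref{integralth-low} to guarantee that $B({\bf f})^{(\cdot)}$ is well-defined and jointly continuous in its space and time arguments; combined with the obvious joint continuity of the flow $\Psi_B^{\bf f}$ in all of its arguments, this renders $G$ jointly continuous in a neighborhood of the origin. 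Moreover, evaluating at $s_i=0$ and $t_i=0$ collapses every exponential in Definition~\ref{commutator} and every $Ad$-operator in Definition~\ref{def-expbra} to the identity, so a straightforward recursion on $m$ gives $G(0,\dots,0;0) = x B({\bf f})$.

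Subtracting the constant $t_1\cdots t_m\cdot xB({\bf f}) = \int_0^{t_1}\!\!\cdots\int_0^{t_m} xB({\bf f})\,ds_1\cdots ds_m$ from both sides and estimating by the uniform deviation of the integrand from its value at the origin yields
\[
\bigl| x\Psi_B^{\bf f}({\bf t}) - x - t_1\cdots t_m\cdot xB({\bf f})\bigr| \le |t_1\cdots t_m|\cdot \sup_{|s_i|\le |t_i|}\bigl| G(s_1,\dots,s_m;{\bf t}) - G(0,\dots,0;0)\bigr|.
\]
By the joint continuity of $G$ at the origin, the supremum on the right tends to $0$ as $\|{\bf t}\|\to 0$, which is exactly the $o(t_1\cdots t_m)$ remainder required.

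The only ingredient that is not entirely mechanical is the joint continuity of $G$ in all of its parameters; this is already handled by the $C^B$ formalism developed immediately before Theorem~\ref{integralth-low}, where it is verified that the integrating-bracket vector field depends continuously on its space point and time parameters. Once this is granted, the argument reduces to the elementary mean-value estimate above, with no need for Taylor expansions of the individual flows. I therefore expect the main (already-completed) obstacle to lie in establishing the continuity under the minimal $C^B$ regularity rather than in the extraction of the asymptotics from the integral representation.
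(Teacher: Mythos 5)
Your argument is correct and follows precisely the route the paper itself indicates: the paper states that the asymptotic formula is ``an almost obvious byproduct'' of Theorem~\ref{integralth-low} and supplies no further detail, and your proof fills in exactly that gap via the mean-value estimate for the iterated integral, using the joint continuity of the integrand in the $C^B$ setting and the fact that $G(0,\dots,0;0)=xB({\bf f})$. The only thing worth flagging explicitly, which you do, is that the continuity of the integrating bracket under $C^B$ regularity is the nontrivial input; once that is granted (as it is in the paragraph preceding Theorem~\ref{integralth-low}), the extraction of the $o(t_1\cdots t_m)$ remainder is mechanical, matching the paper's intent.
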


 In turn,  as a consequence of the asymptotic formulas above (and via a standard application of the  open mapping theorem, see{{,}} e.g.{{,}} \cite{RaSu01}),  one gets a low {{r}}egularity version of    Chow-Rashevski's controllability theorem:

\begin{theorem}[Chow-Rashevski]\label{ChowIntro}
Let $\{f_1,\dots\red{,}f_r\}$ be a family of ($C^1$) %continuous %$C^1$
vector fields on  $M$. Let us consider the driftless control system
\bel{sistema}
\dot x = \sum_{i=1}^r u_if_i(x)
\eeq
with control constraints $|u_i|\leq 1${{,}} $i=1, \dots,r$. Let $x_*\in M$, and let
$
 B_1,\dots,B_\ell$ and
$ {\bf f}_1, \dots, {\bf f}_{\ell}$
be canonical iterated brackets, and finite collections of the vector fields $f_i$, $i=1,\dots r$, respectively,
such that:
\begin{itemize}
\item[{{(i)}}]   {{F}}or every $j=1,\dots,\ell$, ${\bf f}_j \in C^{B_j}${{;}} %VF^{B_j}(M)
\item[{{(ii)}}] ${{\textnormal{span} \Big\{B_1( {\bf f}_1 )(x_* ), \dots,  B_\ell( {\bf f}_\ell )(x_* )   \Big\} = T_{x_*} M.}}$
%\begin{equation}\label{rancond}
%span \Big\{B_1( {\bf f}_1 )(x_* ), \dots,  B_\ell( {\bf f}_\ell )(x_* )   \Big\} = T_{x_*} M
%\end{equation}
\end{itemize}
Then the control system $\eqref{sistema}$ is locally controllable from $x_*$ in small time.
More precisely, if $d$ is  {{the}} Riemannian distance defined on an open set $A$ containing the  point $x_*$, and if $k$ is the maximum of the degrees of the iterated Lie brackets $B_j$, then there exist a neighborhood $U\subset A$ of $x_*$ and a positive constant $C$ such that for every $x\in U$ one has
\bel{min-time}
T(x)\leq C {{d}}(x,x_*{{)}}^{\frac{1}{k}},\nonumber
\eeq
where $T(x)$ denotes the minimum time to reach $x$ over the set of admissible controls, provided that this set contains the piecewise constant %bang-bang
 controls $t {{\mapsto}} (u_1(t), \dots, u_m(t) )$ such that at each time $t$ only one of the numbers $u_i(t)$, $i=1, \dots, m$, is nonzero.
\end{theorem}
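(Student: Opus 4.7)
The plan is to derive the Chow--Rashevski statement as a more or less standard consequence of the asymptotic formula (Theorem~\ref{asyformcor}), via a topological (open mapping / Brouwer) argument. First, since the vectors $B_1({\bf f}_1)(x_*),\dots, B_\ell({\bf f}_\ell)(x_*)$ span $T_{x_*}M$, one may extract a sub-collection of $n=\dim M$ of them that form a basis; after relabeling, assume these are $B_1({\bf f}_1)(x_*),\dots, B_n({\bf f}_n)(x_*)$ and let $m_j=\deg(B_j)\le k$.

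Next I would build, for each $j\in\{1,\dots,n\}$, a continuous curve $\sigma_j\mapsto\gamma_j(\sigma_j)$ with $\sigma_j\in[-\delta,\delta]$ that is realized by the control system and such that $x\gamma_j(\sigma_j)=x+\sigma_j\,B_j({\bf f}_j)(x)+o(\sigma_j)$. Concretely, the multiflow $\Psi_{B_j}^{{\bf f}_j}(t_1,\dots,t_{m_j})$ is a composition of $N(B_j)$ flows $e^{\pm t_i f_{i_p}}$, each of which is a trajectory of \eqref{sistema} with exactly one control $u_i$ equal to $\pm 1$; thus the composition lies in the reachable set in time $\sum_p|t_{i_p}|$. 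Choosing signs and setting $|t_i|=|\sigma_j|^{1/m_j}$ (with a single sign flip to get both signs of $\sigma_j$ when $m_j$ is even) and applying Theorem~\ref{asyformcor} gives the desired asymptotics, with $\gamma_j(\sigma_j)$ reached in time $N(B_j)|\sigma_j|^{1/m_j}\le C|\sigma_j|^{1/k}$.

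Now define $\Phi:[-\delta,\delta]^n\to M$ by concatenating these paths:
\[
\Phi(\sigma_1,\dots,\sigma_n)\,\equaldef\,x_*\gamma_1(\sigma_1)\gamma_2(\sigma_2)\cdots\gamma_n(\sigma_n).
\]
A chain-rule-style computation (expanding each factor to first order and using that the higher-order cross terms remain $o(\|\sigma\|)$) yields
\[
\Phi(\sigma)=x_*+\sum_{j=1}^n\sigma_j\,B_j({\bf f}_j)(x_*)+o(\|\sigma\|),
\]
as $\|\sigma\|\to 0$. Because the matrix $\bigl[B_1({\bf f}_1)(x_*)\,|\cdots|\,B_n({\bf f}_n)(x_*)\bigr]$ is invertible, a standard Brouwer-degree / open-mapping argument (see e.g.\ \cite{RaSu01}) shows that for every $\rho$ small enough the image $\Phi([-\rho,\rho]^n)$ contains a ball $B_{c\rho}(x_*)$ for some constant $c>0$. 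By construction the time needed to realize any $\Phi(\sigma)$ is at most $\sum_j N(B_j)|\sigma_j|^{1/k}\le C\rho^{1/k}$, which yields the bound $T(x)\le C\,d(x,x_*)^{1/k}$.

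The main obstacle is the fact that $\Phi$ is \emph{not} differentiable at the origin: each $\gamma_j$ has a cusp (the $1/m_j$-th root parametrization), so one cannot invoke the classical inverse function theorem. The workaround is the topological open-mapping argument: the asymptotic expansion above says that $\Phi$ is a uniform $o(\|\sigma\|)$ perturbation of an invertible linear map, which is enough for a homotopy / Brouwer fixed-point argument to conclude surjectivity onto a neighborhood of $x_*$. The remaining bookkeeping --- verifying that the $o(\|\sigma\|)$ error is uniform, that cross terms between consecutive $\gamma_j$'s do not spoil the asymptotics, and that the admissibility requirement on the controls is preserved under concatenation --- is routine.
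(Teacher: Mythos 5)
Your proof is correct and is precisely the "standard application of the open mapping theorem" that the paper invokes (and delegates to \cite{RaSu01}) rather than spelling out: build the cusped curves $\gamma_j$ from the multiflows $\Psi_{B_j}^{{\bf f}_j}$ with $|t_i|=|\sigma_j|^{1/m_j}$, use the asymptotic formula of Theorem~\ref{asyformcor} to get the first-order expansion in $\sigma_j$, concatenate, and conclude by a Brouwer-degree argument since $\Phi$ is a uniform $o(\|\sigma\|)$ perturbation of an invertible linear map. The paper does not give a detailed proof of this theorem, so your write-up fills in exactly the route the authors had in mind, including the time estimate $T(x)\le C\,d(x,x_*)^{1/k}$ obtained from $|\sigma_j|^{1/m_j}\le|\sigma_j|^{1/k}$ and the linear control of $\|\sigma\|$ by $|x-x_*|$.
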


\begin{remark}\label{c0rem}  In view of some  arguments utilized in \cite{RaSu01},  the $C^1$-regularity assumption  for  the vector fields $f_i$  in Theorem \ref{ChowIntro} may be further weakened: in fact,   the only needed regularity hypotheses  are those stated {{in}} point ${{\textnormal{(i)}}}$. The latter, in turn, allow for some of the vector fields $f_i$ to be just continuous, so that the corresponding   flows are set-valued maps. We refer to Subsection \ref{Subsec-nonsmooth} for other considerations on the regularity question.

\end{remark}

 \section{Concluding remarks}\label{concludingsec}

\subsection{On the {{``}}adjoint'' structure of integrating brackets} The crucial difference between integrating brackets of degree $2$ and integrating brackets of degree greater than $2$ consists in the fact that while    the former are  {\it adjoint} to the corresponding Lie brackets, namely
$
 [f_1, f_2]^{({{s_2}},s_1)}  \equaldef\displaystyle Ad_{e^{{{s_2}} f_2}e^{s_1f_1}} \left[f_1\,,\,f_2 \right],
$
the latter in general include intermediate adjoining operations. Indeed this  is  already true  when the degree is equal to three, in that  two adjoinings are needed. For instance:
$$
[[f_1,f_2],f_3]^{(t_1,{{s_3}},s_1,s_2)}
=
Ad_{e^{{{s_3}}f_3}e^{t_1 f_1}e^{s_2f_2}e^{-t_1f_1}e^{-s_2 f_2}}
 \Big[ Ad_{e^{s_2 f_2}e^{s_1f_1}} \left[f_1\,,\,f_2 \right]  \,,\, f_3 \Big]{{.}}
%e^{s_2 f_2}e^{t_1f_1}e^{-s_2f_2}e^{-t_1 f_1}e^{-t_3f_3}
$$
In fact, this would not obstruct the possibility that
\bel{adj2}
[[f_1,f_2],f_3]^{(t_1,{{s_3}},s_1,s_2)}
=
Ad_{\phi(t_1,{{s_3}}, s_1,s_2)}
 \Big[  \left[f_1\,,\,f_2 \right]  \,,\, f_3 \Big]  %Ad_{e^{t_2 f_2}e^{s_1f_1}}
%e^{s_2 f_2}e^{t_1f_1}e^{-s_2f_2}e^{-t_1 f_1}e^{-t_3f_3}
\eeq
for some $(t_1,{{s_3}}, s_1,s_2)$-dependent diffeomorphism $\phi$. However, let us point out that if $\eqref{adj2}$ were standing, then, in view of the integral  representation provided by Theorem~\ref{integralth}, the vanishing of the iterated Lie bracket $[[f_1,f_2],f_3]$ would imply that $\Psi_{[[X_1,X_2],X_3]}^{(f_1,f_2,f_3)} = Id_M$.    Notice incidentally that by Proposition~\ref{ad-br-cor}, \eqref{adj2} holds true for any $f_3$, as soon as $0=[[f_1,f_2],f_2] = [[f_1,f_2],f_1] = 0$. 

Yet, in general  \eqref{adj2} does not hold, so that in general one has  $\Psi_{[[X_1,X_2],X_3]}^{(f_1,f_2,f_3)} \neq Id_M$. This is in fact what we get from the following simple example:

\begin{example}\label{concluding-ex}
%;
 %unless any of the $t_i$, $i=1,2,3$,vanishes:
% we say  that $f_1,f_2,f_3$ {\it do not commute}.
In $M = \rr^2$ let us consider   the linear vector fields
$f_i (x, y)=  A_i \left(\begin{array}{c} x \\ y \end{array}  \right) $, $i=1,2,3$,
where

%\begin{gather*}
$$
A_1 = \left(
\begin{array}{cc}
 0 & 0 \\
 1 & 0 \\
\end{array}
\right),   \quad
 A_2=
 \left(
\begin{array}{cc}
 0 & 1 \\
 0 & 0 \\
\end{array}
\right), \quad
A_3=\left(
\begin{array}{cc}
 1 & 0 \\
 0 & 0 \\
\end{array}
\right) .
$$
%\end{example}

%\end{gather*}

Let us show  that $[[f_1, f_2], f_3] \equiv 0$, and nevertheless $\Psi_{[[X_1, X_2], X_3]}^{(f_1,f_2,f_3) } \neq Id_M$.
Clearly $[f_1,f_2]$, $[f_1,f_2]^{({{s_2}},s_1)}$, $[[f_1, f_2], f_3]$, $[[f_1, f_2], f_3]^{(t_1,{{s_3}},s_1,s_2)}$   are also linear (parameterized) vector fields: the corresponding matrices are defined, respectively, as
\begin{gather*}
[A_1, A_2]\equaldef {{A_1A_2-A_2A_1}}, \\
[A_1, A_2]^{({{s_2}},s_1)} \equaldef {{e^{s_2A_2}  e^{s_1A_1} [A_1, A_2] e^{-s_1A_1} e^{-s_2A_2}}}, \\
[[A_1, A_2], A_3]\equaldef {{A_1A_2A_3-A_2A_1A_3 -A_3A_1A_2+A_3A_2A_1}}{{,}}\\
\begin{small}
[[A_1, A_2], A_3]^{(t_1,{{s_3}},s_1,s_2)} \equaldef {{e^{s_3A_3}e^{t_1 A_1}e^{s_2A_2}e^{-t_1A_1}e^{-s_2 A_2} \left[ [A_1, A_2]^{(s_2,s_1)}, \, A_3 \right]
e^{s_2 A_2} e^{t_1A_1}  e^{-s_2A_2} e^{-t_1 A_1} e^{-s_3A_3}}}{{.}}\end{small}
{{\footnotemark}}
\end{gather*}
\footnotetext{More generally, if $B$ is a canonical bracket of $deg(B)=m \ge 1$ with canonical factorization $B=[B_1, B_2^{(m_1)}]$ for $1\le m_1 <m$, and ${\bf f} =(f_1, \ldots, f_m) $ an $m$-tuple of linear vector fields on some linear space $M$, then $B({\bf f})$, $B({\bf f})^{\left(\underset{{{\{m_1,m\}}}}{\bf t}{{, s_m}}, {\bf s} \right) }$ are also linear vector fields on $M$, for all ${\bf t}\in \rr^m$, ${\bf s} \in \rr^{m-1}$.   Denoting by ${\bf A} = (A_1, \ldots, A_m)$ the $m$-tuple of matrices associated in order to the components of ${\bf f}$ with respect to some fixed basis of $M$, then one can define in an obvious way matrices $B({\bf A})$, and
$B({\bf A})^{\left(\underset{{{\{m_1,m\}}}}{\bf t}{{, s_m}}, {\bf s} \right) }$ in such a way that they be the associated matrices of $B({\bf f})$ and $B({\bf f})^{\left(\underset{{{\{m_1,m\}}}}{\bf t}{{, s_m}}, {\bf s} \right) }$ with respect to that same basis of $M$.}
%\end{document}
One finds %(with the help of Computer Algebra System; we used Mathematica 9)
\begin{gather*}
[A_1, A_2]= \left(
\begin{array}{cc}
 {{-1}} & 0 \\
 {\phantom{-}}0 & {{1}} \\
\end{array}
\right),\\
[[A_1, A_2], A_3]= \left(
\begin{array}{cc}
 0 & 0 \\
 0 & 0 \\
\end{array}
\right), \\ 
[A_1, A_2]^{({{s_2}},s_1)} =
\left(
{{\begin{array}{cc}
 -2 s_1s_2-1 & 2s_2\left(s_1s_2+1\right)   \\
 -2 {s_1} & 2s_1s_2+1
\end{array}}}
\right){{,}}\\
\begin{array}{c}
[[A_1, A_2], A_3]^{(t_1,{{s_3}},s_1,s_2)} = \\
\begin{small}
\left(
{{\begin{array}{cc}
-2s_2^2t_1\left(s_1 + t_1 - s_2t_1^2 + 2s_1s_2t_1\right) & -2s_2e^{s_3}\left(s_2^2t_1^2 - 2s_1s_2^2t_1 - 2s_2t_1 + s_1s_2 + 1\right) \\
-2e^{-s_3}\left(- s_2^3t_1^4 + 2s_1s_2^3t_1^3 + 3s_1s_2^2t_1^2 + 2s_1s_2t_1 + s_1\right) &  2s_2^2t_1\left(s_1 + t_1 - s_2t_1^2 + 2s_1s_2t_1\right) \\
\end{array}}}
\right).
\end{small}
\end{array}
\end{gather*}
%Thus, in this case $[f_1,f_2]^{(t_2,s_1)} \neq [f_1, f_2]$.
By definition (or by formula \eqref{lie3int}), one gets\begin{small}
{{\begin{align*}
&(x,y)\Psi_{[[X_1, X_2],X_3] }^{(f_1, f_2,f_3)}(t_1,t_2,t_3) = (x, y)\\
&+\Big(t_1^3t_2^3\left(e^{-t_3} - 1\right)x + t_1t_2^2\left(e^{t_3} - 1\right)\left(t_1t_2 - 1\right)y, - t_1^2t_2e^{-t_3}\left(e^{t_3} - 1\right)\left(t_1^2t_2^2 + t_1t_2 + 1\right)x+t_1^3t_2^3\left(e^{t_3} - 1\right)y\Big)
\end{align*}}}
\end{small}
%\begin{multline}
%(x,y)\Psi_{[[X_1, X_2],X_3] }^{(f_1, f_2,f_3)}(t_1,t_2,t_3) =\\
%\frac{1}{12} \left(  {t_1}^3 {t_2}^3 e^{-{t_3}} \left(e^{{t_3}}-1\right)  \left({t_1}^2 {t_2}^2-12\right) x   + {t_1} {t_2}^2 e^{-{t_3}} %\left(e^{{t_3}}-1\right)  ({t_1} {t_2}-1) \left({t_1}^2 {t_2}^2-12\right) y, \right. \\
%{t_1}^2 {t_2}  \left(12 \left(e^{{t_3}}-1\right) ({t_1} {t_2} ({t_1} {t_2}+1)+1)-{t_1}^2 {t_2}^2 {t_3} ({t_1} {t_2} ({t_1} {t_2}+2)-6)\right) x \\  \left. -  {t_1}^2 {t_2}^2  \left({t_1}^3 {t_2}^3 {t_3}+{t_1}^2 {t_2}^2 {t_3}-4 {t_1} {t_2} \left(2 {t_3}+3 e^{{t_3}}-3\right)+6 {t_3}\right) y\right)
%\end{multline}
for all $(x, y) \in \rr^2$ and $t_1, t_2, t_3 \in \rr$. Hence, although $[[f_1,f_2],f_3]$ vanishes (identically), %$f_1, f_2, f_3$ do not commute
$\Psi_{[[X_1, X_2],X_3] }^{(f_1, f_2,f_3)}$ $(t_1,t_2,t_3)  \neq Id_M$, so that \eqref{adj2} cannot hold.
\end{example}

\subsection{Nonsmooth vector fields and set-valued Lie brackets}\label{Subsec-nonsmooth}

Let us conclude with a theme already mentioned in the Introduction. In \cite{RaSu01}  the following  notion  of {\it set valued} Lie bracket $[f,g]_{set}$ has been proposed  for locally Lipschitz continuous vector fields $f,g$:  for every $x\in M$, one lets
\bel{setbr}[f,g]_{set}(x) \equaldef co\left\{ \lim_{j\to \infty} [f,g](x_j)\,\,|\,\,  (x_j)_{j\in\nn}\subset DIFF(f)\cap DIFF(g),\,\, \lim_{j\to \infty}x_j=x \right\},
\eeq
where $co$ means {{the convex envelope}}, and $DIFF(f)\subset M$ and $DIFF(g)\subset M$ denote   the subsets where $f$ and $g$, respectively, are differentiable. By Rademacher's theorem,  these sets have full measure, so, in particular $DIFF(f)\cap DIFF(g)$ is dense in $M$.
The set-valued map $x\mapsto [f,g]_{set}(x)$ turns out to be upper  semicontinuous with compact, convex nonempty values.
In  \cite{RaSu01} this bracket  has been utilized to provide  a nonsmooth generalization of Chow-Rashevski's theorem. Successively it has been also used to prove  Frobenius-like and commutativity results for nonsmooth vector fields.
 Therefore, a natural issue might be a  generalization of this notion to formal brackets $B$ of degree $m\geq 3$ and vector fields that fail to be of class $C^B$. As mentioned in the {{I}}ntroduction, a mere iteration of   \eqref{setbr},  produces a (set-valued) bracket that is {\it too small} for various purposes, notably for asymptotic formulas.  For instance (see the example in \cite{RaSu07}*{Section~7}) one can find  a point $x\in M$ and vector fields $f,g$ with locally Lipschitz derivatives such that, setting $ h\equaldef [f,g]$,   {\it the  map $t\mapsto x\Psi_{[[X_1,X_2],X_3]}^{(f,g,h)}(t,t,t) -x$  is not $o(t^3)$}, while
$$
x[[f,g],h]_{set} =x[h,h]_{set} = 0{{.}} \footnote{ If $f, g$ where of class $C^2$ and hence h of class $C^1$, this map {\it would be $o(t^3)$}, namely
$$ x\Psi_{[[X_1,X_2],X_3]}^{(f,g,h)}(t,t,t) -x = x[[f,g],h]\cdot t^3 + o(t^3) =o(t^3).$$ }
$$
Our guess is that a suitable notion of iterated (set-valued) bracket should contain more tangent vectors than those  prescribed by definition \eqref{setbr} (namely the  limits  of sequences $ ([[f,g],h](x_j))_{j\in \nn}$ for $ (x_j)_{j\in\nn}\subset DIFF(h{{)}}$ with $x_j \to x$ as $j \to \infty$). More specifically, we think that the nested structure of a formal bracket $B$, and in particular, the recursive definition of integrating brackets,  suggests a new  notion of set-valued bracket giving rise to  asymptotic formulas   that might prove  useful for obtaining a higher order, {{nonsmooth}}, Chow-Rashevski type result.

\section*{Acknowledgements}

We wish to express  our deep gratitude to H{{{\'e}}}ctor J. Sussmann for his active help, made in particular of very inspiring  ideas, most of which at a quite advanced stage of development. Furthermore, we are very grateful to Rohit Gupta, who  read carefully the manuscript and helped us \red{significantly} in improving some quite misleading notation. We \red{are} also indebted {{to}} the anonymous referees for crucial comments on various aspects of the paper.

%%%%%%%%%%%%%%%%%%%%%%%%%%%%%%%%%%%%%%%%%%%%%%%%%%%%%%%%%%%%%%%%%%%%%%%%%%%%%%%%%%%%%%%%%%%%%%%%%%%%%%%%%%%%%%%%%%%

% \begin{remark}[A question]
%This question may arise naturally.
%Assume that the highest order derivatives of vector fields $f_1, \dots, f_r$ in Theorem~\ref{ChowIntro} needed &in order to compute
%\[
%B_1 ({ \bf f}_1 )(x_*), \ldots,  B_\ell ({ \bf f }_\ell )(x_*)
%\]
% are not continuous, but only defined almost everywhere, measurable and locally bounded. Let $x_*$ be a point %where all the said derivatives are defined, and the rank condition \eqref{rancond} is satisfied. Can we still %conclude that the thesis of Theorem~\ref{ChowIntro} holds true?  The answer is ``Yes'' but for its proof the %reader has to wait for the sequel of this paper.
%\end{remark}

\begin{bibdiv}
\begin{biblist}

\bib{AgGa78}{article}{
   author={Agrachev, A. A.},
   author={Gamkrelidze, R. V.},
   title={Exponential representation of flows and a chronological
   enumeration},
   language={Russian},
   journal={Mat. Sb. (N.S.)},
   volume={107(149)},
   date={1978},
   number={4},
   pages={467--532, 639},
 %  issn={0368-8666},
 %  review={\MR{524203 (80g:49036)}},
}

\bib{AgGa80}{article}{
   author={Agrachev, A. A.},
   author={Gamkrelidze, R. V.},
   title={Chronological algebras and nonstationary vector fields},
   language={Russian},
   conference={
			title={Itogi Nauki i Tekhniki. Ser. Probl. Geom., vol. 11},
   },
   book={
      %publisher={Akad. Nauk SSSR, Vsesoyuz. Inst. Nauchn. i Tekhn. Informatsii,
   %Moscow},
	     publisher={VINITI, Moscow},
	 },
   date={1980},
   pages={135--176},
 %  review={\MR{579930 (82b:58075)}},
}

\bib{BrBr11}{article}{
   author={Bramanti, M.},
   author={Brandolini, L.},
   author={Pedroni, M.},
   title={Basic properties of nonsmooth H\"ormander's vector fields and
   Poincar\'e's inequality},
   journal={Forum Math.},
   volume={25},
   date={2013},
   number={4},
   pages={703--769},
%   issn={0933-7741},
%   review={\MR{3089748}},
%   doi={10.1515/form.2011.133},
}

\bib{AuCe84}{book}{
   author={Aubin, J.-P.},
   author={Cellina, A.},
   title={Differential Inclusions},
   series={Grundlehren der Mathematischen Wissenschaften (Fundamental
   Principles of Mathematical Sciences)},
   volume={264},
   %note={Set-valued maps and viability theory},
   publisher={Springer-Verlag},
   place={Berlin},
   date={1984},
   pages={xiii+342},
   %isbn={3-540-13105-1},
   %review={\MR{755330 (85j:49010)}},
   %doi={10.1007/978-3-642-69512-4},
}

\bib{AuFr09}{book}{
   author={Aubin, J.-P.},
   author={Frankowska, H.},
   title={Set-Valued Analysis},
   series={Modern Birkh\"auser Classics},
   %note={Reprint of the 1990 edition [MR1048347]},
   publisher={Birkh\"auser},
   place={Boston, MA},
   date={2009},
   pages={xx+461},
  % isbn={978-0-8176-4847-3},
 %  review={\MR{2458436}},
}

\bib{Ch39}{article}{
   author={Chow, W.-L.},
   title={\"Uber systeme von linearen partiellen differentialgleichungen
   erster ordnung},
   language={German},
   journal={Math. Ann.},
   volume={117},
   date={1939},
   pages={98--105},
  % issn={0025-5831},
 %  review={\MR{0001880 (1,313d)}},
}

\bib{Cl1866}{article}{
     author={Clebsh, A.},
     title ={\"Uber die simultane integration linearer partieller
    differentialgleichungen},
    language={German},
    journal={J. Reine Angew. Math.},
		volume={ 65},
    date={1866},
    pages={257--268},
}

\bib{deahna}{article}{
author={Deahana, C.},
title={\"Uber die bedingungen
der integrabilit\"at linearer differentialgleichungen erster
ordnung zwischen einer beliebigen anzahl ver\"anderlicher
gr\"o\ss en},
journal={J. Reine Angew. Math.},
volume={20},
date={1840},
pages={340--349},
}

\bib{MoMo14-Jacobi}{article}{
   author={Montanari, A.},
   author={Morbidelli, D.},
   title={Generalized Jacobi identities and ball-box theorem for
   horizontally regular vector fields},
   journal={J. Geom. Anal.},
   volume={24},
   date={2014},
   number={2},
   pages={687--720},
   %issn={1050-6926},
   %review={\MR{3192293}},
   %doi={10.1007/s12220-012-9351-z},
}

\bib{MoMo13-Involutive}{article}{
   author={Montanari, A.},
   author={Morbidelli, D.},
   title={Step-$s$ involutive families of vector fields, their orbits and
   the Poincar\'e inequality},
   language={English, with English and French summaries},
   journal={J. Math. Pures Appl.},
   volume={99},
   date={2013},
   number={4},
   pages={375--394},
   %issn={0021-7824},
   %review={\MR{3035947}},
   %doi={10.1016/j.matpur.2012.09.005},
}

\bib{MoMo13-Almost-exponential}{article}{
   author={Montanari, A.},
   author={Morbidelli, D.},
   title={Almost exponential maps and integrability results for a class of
   horizontally regular vector fields},
   journal={Pot. Anal.},
   volume={38},
   date={2013},
   number={2},
   pages={611--633},
   %issn={0926-2601},
   %review={\MR{3015367}},
   %doi={10.1007/s11118-012-9289-6},
}

\bib{MoMo13-Frobenius}{article}{
   author={Montanari, A.},
   author={Morbidelli, D.},
   title={A Frobenius-type theorem for singular Lipschitz distributions},
   journal={J. Math. Anal. Appl.},
   volume={399},
   date={2013},
   number={2},
   pages={692--700},
   %issn={0022-247X},
   %review={\MR{2996747}},
   %doi={10.1016/j.jmaa.2012.10.040},
}

\bib{MoMo12}{article}{
   author={Montanari, A.},
   author={Morbidelli, D.},
   title={Nonsmooth H\"ormander vector fields and their control balls},
   journal={Trans. Amer. Math. Soc.},
   volume={364},
   date={2012},
   number={5},
   pages={2339--2375},
   %issn={0002-9947},
   %review={\MR{2888209}},
   %doi={10.1090/S0002-9947-2011-05395-X},
}

\bib{MoMo04}{article}{
   author={Montanari, A.},
   author={Morbidelli, D.},
   title={Balls defined by nonsmooth vector fields and the Poincar\'e
   inequality},
   language={English, with English and French summaries},
   journal={Ann. Inst. Fourier (Grenoble)},
   volume={54},
   date={2004},
   number={2},
   pages={431--452},
   %issn={0373-0956},
   %review={\MR{2073841 (2005e:46053)}},
}

\bib{MR1916408}{article}{
   author={Montanari, A.},
   author={Morbidelli, D.},
   title={Sobolev and Morrey estimates for non-smooth vector fields of step
   two},
   journal={Z. Anal. Anwendungen},
   volume={21},
   date={2002},
   number={1},
   pages={135--157},
   %issn={0232-2064},
   %review={\MR{1916408 (2005d:46080)}},
   %doi={10.4171/ZAA/1068},
}

\bib{Ra07}{article}{
   author={Rampazzo, F.},
   title={Frobenius-type theorems for Lipschitz distributions},
   journal={J. Diff. Eq.},
   volume={243},
   date={2007},
   number={2},
   pages={270--300},
   %issn={0022-0396},
   %review={\MR{2371789 (2009e:58004)}},
   %doi={10.1016/j.jde.2007.05.040},
}

\bib{RaSu07}{article}{
   author={Rampazzo, F.},
   author={Sussmann, H. J.},
   title={Commutators of flow maps of nonsmooth vector fields},
   journal={J. Diff. Eq.},
   volume={232},
   date={2007},
   number={1},
   pages={134--175},
   %issn={0022-0396},
   %review={\MR{2281192 (2007j:49021)}},
   %doi={10.1016/j.jde.2006.04.016},
}

\bib{RaSu01}{article}{
  author={Rampazzo, F.},
  author={Sussmann, H. J.},
  title={Set-valued differentials and a nonsmooth version of Chow-Rashevski's theorem},
  journal={Proceedings of the 40th IEEE Conference on Decision and Control, Orlando, Florida (IEEE Publications, New York)},
  volume={3},
  date={2001},
  pages={2613--2618}
}

\bib{SaWh06}{article}{
   author={Sawyer, E. T.},
   author={Wheeden, R. L.},
   title={H\"older continuity of weak solutions to subelliptic equations
   with rough coefficients},
   journal={Mem. Amer. Math. Soc.},
   %volume={180},
   date={2006},
   number={847},
   %pages={x+157},
 }

\bib{Si96}{article}{
   author={Simi{\'c}, S.},
   title={Lipschitz distributions and Anosov flows},
   journal={Proc. Amer. Math. Soc.},
   volume={124},
   date={1996},
   number={6},
   pages={1869--1877},
   %issn={0002-9939},
   %review={\MR{1328378 (96h:58130)}},
   %doi={10.1090/S0002-9939-96-03423-5},
}

\bib{Su01}{article}{
   author={Sussmann, H. J.},
   title={New theories of set-valued differentials and new versions of the
   maximum principle of optimal control theory},
   conference={
      title={Nonlinear control in the year 2000, vol.\ 2 (Paris)},
   },
   book={
      series={Lecture Notes in Control and Inform. Sci.},
      volume={259},
      publisher={Springer},
      place={London},
   },
   date={2001},
   pages={487--526},
  % review={\MR{1806192 (2002e:49040)}},
  % doi={10.1007/BFb0110322},
}

\bib{Su00}{article}{
   author={Sussmann, H. J.},
   title={R\'esultats r\'ecents sur les courbes optimales},
   language={French, with French summary},
   conference={
      title={Quelques aspects de la th\'eorie du contr\^ole},
   },
   book={
      %series={SMF Journ. Annu.},
			series={15$^{e}$ Journ{\'e}e Annuelle de la Soci{\'e}t{\'e} Math{\'e}mathique de France (SMF)},
      %volume={2000},
      %publisher={Soc. Math. France},
      place={Paris},
   },
   date={2000},
   pages={1--52},
 %  review={\MR{1799558 (2002a:49024)}},
}

\end{biblist}
\end{bibdiv}

\end{document}